\documentclass{article}
\usepackage{amsfonts}


\newtheorem{theorem}{Theorem}[section]

\newtheorem{definition}[theorem]{Definition}
\newtheorem{example}[theorem]{Example}

\newtheorem{proposition}[theorem]{Proposition}
\newtheorem{remark}[theorem]{Remark}

\newenvironment{proof}[1][Proof]{\noindent\textbf{#1.} }{\ \rule{0.5em}{0.5em}}

\def\RR{\mathbb{R}}

\def\EE{\mathbb{E}}

\def\cB{{\cal B}}

\def\cF{{\cal F}}

\def\de{{\delta}}
\def\la{{\lambda}}
\def\si{{\sigma}}

\def\Om{{\Omega}}
\def\al{{\alpha}}

\def\Ga{{\Gamma}}

\def\de{{\delta}}

\def\si{{\sigma}}

\def\tr{{ \hbox{ Tr} }}

\def\la{{\lambda}}

\def\vare{{\varepsilon}}
\def\vE{{\cal E}}
\def \eref#1{\hbox{(\ref{#1})}}

\def\Om{{\Omega}}
\def\om{{\omega}}
\def\cS{{\cal S}}
\def \comb#1#2{{\left({#1}\atop{#2}\right)}}
\def\hatotimes{{\hat {\otimes}}}

\begin{document}

\title{ Wick Calculus For Nonlinear Gaussian Functionals  }
\author{
Yaozhong Hu\thanks{%
Y. Hu was supported in part by the National Science Foundation under
Grant No.   DMS0504783, and the International Research Team on
Complex
Systems, Chines Academy of Sciences. }  \ \  and Jia-An Yan\thanks{%
J.-A. Yan was supported by the National Natural Science Foundation
of China (No. 10571167), the National Basic Research Program of
China (973 Program) (No.2007CB814902), and the Science Fund for
Creative Research Groups (No.10721101). }
\\
Department of Mathematics\thinspace ,\ University of Kansas\\
405 Snow Hall\thinspace ,\ Lawrence, Kansas 66045-2142\\
}
\date{}
\maketitle

\begin{abstract} This paper   surveys  some results on  Wick product and Wick
renormalization. The framework is the abstract Wiener space.  Some
known results on    Wick product and Wick renormalization in the
white noise analysis framework are presented for classical random
variables. Some conditions are described for random variables whose
Wick product or whose renormalization are integrable random
variables.   Relevant results on multiple Wiener integrals, second
quantization operator, Malliavin calculus and their relations with
the Wick product and Wick renormalization are also briefly
presented. A useful tool for Wick product is the $S$-transform which
is also described without the introduction of generalized random
variables.
\end{abstract}

{\bf Keyword}: Malliavin calculus, Multiple integral, Chaos
decomposition, \,\qquad \qquad \ \ Wick
 product, Wick renormalization

{\bf 2000 MR Subject Classification}:  60G15, 60H05, 60H07, 60H40

\section{Introduction}
In the constructive Euclidean quantum field theory, such as the
$P(\phi)$ theory or the  $:\phi^4:$ theory, there have been
encountered {\it infinite}  quantities, which originated (from
mathematical point of view)   from the product of generalized
functions (see \cite{simon}, \cite{glimm}, or \cite{kallianpur} and
the references therein for more details). To obtain useful
information out of these infinite quantities, Wick (\cite{wick})
first introduced the now so-called {\it Wick renormalization}.
According to  \cite{holdenoksendal},   the {\it Wick product}  in
stochastic analysis  was first introduced  by Hida and Ikeda
\cite{hida}.   Meyer and Yan (\cite{meyeryan}) extended to cover the
Wick products of Hida distributions.  Now Wick product is applied to
stochastic differential equations (\cite{huoksendal}), stochastic
partial differential equations, stochastic quantization
(\cite{kallianpur}) and many other fields.

In stochastic analysis, most of the research work on Wick product
are on Hida distribution spaces or other spaces of generalized
random variables. In this paper we   survey   some results that we
have frequently used.  To make  the concept of Wick product  accessible to
broader  audience, we restrict ourselves to the classical
framework, namely, the classical random variables.  In fact, a motivation to  write such a  survey
is from some students who need  to know some results relevant to Wick product and the
way of how to use them  in their research.
After we have done some research on the references, we found out that
many results on Wick product have been already known by the second author
in the framework of Hida distribution
(\cite{yan91}, \cite{yan95}). But they are relatively
unknown to the experts on the field.

In Section 2, we introduce the framework and some
results which are useful in Wick product.  In particular, we introduce
the multiple integrals and the chaos expansion.

Malliavin calculus is very useful in the problems where Wick product
presents. In Section 3, we give a simplest presentation of some
results in Malliavin calculus which are relevant to wick product.

Wick product is introduced in Section 4. We present some  basic
results.  Some of them may  be new.

In Euclidean quantum field theory, the Wick renormalization is more
widely used. In section 5, we present some results  on Wick renomalization.
In fact, in Euclidean quantum field theory a very special
abstract Wiener space and a very special random variables are needed
(see \cite{kallianpur}).  However, we will not go into detail.

This paper is a condensed survey of some results on Wick product.
We do not intend to give a survey on the historical account.
So in some cases,  for a concept or a result, probably not the original
references are going to be cited.

\setcounter{equation}{0}

\section{ Multiple integrals and chaos expansion}

Let $H$  be a separable Hilbert space with scalar product
$\langle\cdot,\cdot \rangle _H$. There is a  Banach space $B$ ($B$ is not  unique)
with the following properties.
\begin{description}
\item{(i)}\quad $H  $ is continuously embedded in $B$ and   $H$ is dense in $B$.
The dual $B'$ (the space of continuous linear functionals) of $B$
is identified as a (dense) subspace of $H$ ($B'\subset B$).
\item{(ii)}\quad There is  a Borel measure
$\mu$ on $(B, \cB)$, where $\cB$ is the Borel $\si$-algebra of $B$
such that
\begin{equation}
\int_{B} \exp \{ i \langle l, x\rangle \} d\mu(x) =
\exp \{ -\|l\|_H^2 /2\},  \quad l\in \Phi'\,,
\end{equation}
where $\langle\cdot, \cdot\rangle $  means the pairing  between $B$ and $B'$
(namely, $\langle l , x\rangle=l(x)$).
\end{description}
 The triple $(B, H, \mu)$
is an {\it abstract Wiener space}.  We   denote
$\EE (f)=\int_{B} f(x) d\mu(x)$ and
$L^p=L^p(B, \cB, \mu)$.

For any $l\in B'$,  $\langle l, \cdot\rangle : B\to \RR$ is a mean zero
Gaussian random variable with variance $\|l\|_H^2$.  By a limiting argument,
for any $l\in H$, $\langle l, \cdot\rangle : B\to \RR$ can be defined as a
Gaussian random variable, denoted by $\tilde l$.

Fixed an  $n$ and  introduce the symmetric tensor product $H^{\hatotimes
n}$ by the following procedure.

Let $\{ e_1,   e_2, \cdots \}$  be an orthonormal basis  of $H$ and
let  $\hatotimes$ denote  the symmetric tensor product.  Then
\begin{equation}
f_n=\sum_{\hbox { finite}} f_{i_1, \cdots, i_n} e_{i_1}\hatotimes\cdots
\hatotimes e_{i_n}, \quad  f_{i_1, \cdots, i_n}\in \RR\label{e.2.2}
\end{equation}
is an element of $H^{\hatotimes n}$ with the Hilbert norm
\begin{equation}
\|f_n\|_{H^{\hatotimes n}}^2 =\sum_{\hbox { finite}} |f_{i_1, \cdots,
i_n}|^2\,,
\end{equation}
$H^{\hatotimes n}$  is the completion  of  all the elements of
above form under the above norm.

To define the multiple integral, we need to use the Hermite polynomials.
Let
\[
H_n(x)=(-1)^n e^{\frac{x^2}{2}}\frac{d^n}{dx^n} e^{-\frac{x^2}{2}}
=\sum_{k\le n/2} \frac{(-1)^k n!}{2^k k!(n-2k)!}
x^{n-2k}\,,\quad x\in \RR
\]
be the $n$-th Hermite polynomial ($n=0, 1, 2, \cdots$).  Its generating function is
\[
e^{tx-\frac{t^2}{2}}=\sum_{n=0}^\infty \frac{t^n}{n!} H_n(x)\,.
\]
Any element $f_n$  in  $H^{\hatotimes n}$ of the form
\eref{e.2.2} can be rewritten   as
\begin{equation}
 f_n  =\sum_{\hbox { finite}} f_{j_1, \cdots, j_m} e_{j_1}^{\hat\otimes k_1}\hatotimes\cdots
\hatotimes e_{j_m}^{\hat\otimes k_m}, \quad  f_{j_1, \cdots, j_m}\in
\RR,\,,\label{e.2.4}
\end{equation}
where $j_1\,, \cdots, j_m$ are different.
For this type of integrands the multiple integral is defined as
\begin{equation}
I_n(f_n):= \sum_{\hbox { finite}} f_{j_1, \cdots, j_m} H_{j_1}\left(\tilde e_{j_1}\right)
 \cdots H_{j_m}\left(\tilde e_{j_m}\right)\,.\label{e.2.5}
\end{equation}
In particular, we have for $f\in H$
\begin{equation}
I_n(f^{\otimes n})= \|f\|_H^nH_n(\|f\|_H^{-1}\tilde
f)\,.\label{e.2.6}
\end{equation}
 For general element $f_n$ in $H^{\hatotimes
n}$ we can define the multiple integral $I_n(f_n)$  by the $L^2$
convergence. It is straightforward to obtain the following isometry
equality
\begin{equation}
\EE |I_n(f_n)|^2 =n! \|f_n\|_{H^{\hatotimes n}}^2\,. \label{e.2.7}
\end{equation}
One can also construct the  Fock space $\Phi (H)$ on $H$ as follows.
\[
\Phi (H)=\bigoplus_{n=0}^\infty  H^{\hatotimes n}\,.
\]
The scalar product of two elements $f=(f_0, f_1, f_2, \cdots)$ and $g=(g_0, g_1, g_2, \cdots)$
in $\Phi(H)$ is defined as
\begin{equation}
\langle f\,, g\rangle_{\Phi(H)}  =\sum_{n=0}^\infty n! \langle
f_n\,, g_n\rangle _{H^{\hatotimes n}}\,. \label{e.2.8}
\end{equation}
The chaos expansion theorem states that any square integrable random variable $F$ on
$(B, \cB, \mu)$ can be written as
\begin{equation}
F=\sum_{n=0}^\infty I_n(f_n)\,, \quad f=(f_0, f_1, f_2, \cdots)\in
\Phi(H) \label{e.2.9}
\end{equation}
and
\begin{equation}
\EE(F^2)=\|f\|^2_{\Phi(H)} =\sum_{n=0}^\infty  n!
\|f_n||_{H^{\hatotimes n}}^2\,. \label{e.2.10}
\end{equation}
We refer to \cite{meyer}  and the references
therein for further  details.

\begin{example} Let $B=H=\RR^d$ and let  $\mu$ be the standard Gaussian measure
on $B$.
Then $(B, \cB, \mu)$ is the $d$-dimensional  standard Gaussian measure space.
\end{example}

\begin{example} Let
\[
H=\left\{ f:[0, T]\rightarrow \RR\,; \ f(0)=0\ \hbox{ $f$ is absolutely continuous on }\ [0, T]\right\}\,.
\]
It is a Hilbert space under the norm $\langle f\,, \  g\rangle =\int_0^T f'(t) g'(t) dt$.
Let
\[
\Om= \left\{ f:[0, T]\rightarrow \RR\,; \ f(0)=0\ \hbox{ $f$ is  continuous on }\ [0, T]\right\}
\]
with the sup norm.
Then $(\Om, \cF, \mu)$ is a canonical Wiener space, where $\cF$ is the Borel $\si$-algebra on $\Om$
(with respect to the sup norm).
\end{example}

\begin{example} Consider a domain  $D$  of $\RR^d$ ($d$  dimensional Euclidean
space).  Together with  some nice boundary conditions (if $D$ is not
the whole  space $\RR^d$)  we can prove (see \cite{glimm}) that
there is a kernel $K(x, y)$  such that the following equation
(with  mass $m=1$)  holds
$$
(-\Delta +1) K(x,y) =\delta (x-y)\,,
$$
where $\Delta$ is the Dirichlet Laplacian on $D$.
A Hilbert space of (generalized) functions can be determined by
by
\[
\langle f\,, g\rangle =\int_{D} K(x,y) f(x) f(y) dxdy\,,
\]
where $f$ and $g$ are two (generalized)  real-valued functions.
The Gaussian measure associated with this Hilbert space is useful in the
Euclidean quantum field theory  (see also \cite{kallianpur}
and the references therein).
\end{example}

\begin{definition} Let $f_n\in H^{\hatotimes n}$.  $f_n$ is called {\it negative  definite}  on
$H^{\hatotimes n} $  if
\[
\left\langle f_n,
h^{\hat\otimes n}\right\rangle_{H^{\hatotimes n}}\le 0 \qquad \forall \ \ h\in H
\]
\end{definition}

\begin{example} The following  are proved in \cite{kallianpur}.
\begin{description}
\item{(i)}\quad If    $f_2\in H^{\hatotimes 2}$,  then    there is an
$\al>0$ such that $\EE\exp [\al I_2(f_2)] <\infty$.
And   $\EE\exp [\al I_2(f_2)] <\infty$  is true for all $\al >0$ iff
$f_2$  is negative definite  (see \cite{kallianpur}, Theorem 5.1).
\item{(ii)}\quad For any nonzero $f_{2n+1} $ in $H^{\hatotimes 2n+1}$ ($n\ge 1$) and any $\la\in \RR$,
 $\EE\left(\exp\left\{ \la I_{2n+1}(f_{2n+1})\right\}\right)=\infty$.
 (see \cite{kallianpur},  Theorem 5.2).
 \end{description}
 It is   conjectured  (see \cite
 {kallianpur})  that
 \begin{description}
\item{(iii)}\quad  If $n$ is  even,  then
$\EE\exp [ I_n(f_n)] <\infty$  iff   $f_n$ is negative  definite on
$H^{\hatotimes n} $.
\end{description}
\end{example}

\begin{definition}
If $F$ has a chaos expansion $F=\sum_{n=0}^\infty F_n $ and $\al\in \RR$, then  the second
quantization operator of $\al$ acting on $F$ is defined as
\begin{equation}
\Ga(\al)F=\sum_{n=0}^\infty \al^n F_n\,.
\end{equation}
\end{definition}

For this operator we have the following famous theorem.
\begin{theorem} (Nelson's hypercontractivity) Let $1\le p<q<\infty$.  The following inequality holds
\begin{equation}
\|\Ga(\al)F\|_q \le \| F\|_p\,, \qquad \forall \ F\in L^p
\end{equation}
if and only if $|\al|\le \sqrt{\frac{p-1}{q-1}}$.
\end{theorem}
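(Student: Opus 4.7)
My strategy is to recognize $\Gamma(\alpha)$ as the Ornstein--Uhlenbeck semigroup at time $t$ with $\alpha=e^{-t}$, obtain Mehler's integral representation, reduce the abstract Wiener space statement to a one-dimensional Gaussian integral by tensorization, and invoke Gross's Gaussian logarithmic Sobolev inequality on that one-dimensional factor. Necessity will follow from a short test on an exponential vector.

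I would first obtain Mehler's representation. By definition $\Gamma(\alpha)I_n(h^{\otimes n}) = \alpha^n I_n(h^{\otimes n}) = \alpha^n \|h\|_H^n H_n(\tilde h/\|h\|_H)$, so the generating identity $e^{sx-s^2/2} = \sum_n s^n H_n(x)/n!$ gives, for $\|h\|_H=1$, the relation $\Gamma(\alpha) e^{s\tilde h - s^2/2} = e^{\alpha s \tilde h - \alpha^2 s^2/2}$. Rewriting the right-hand side as a Gaussian convolution and extending by linearity and density to all cylindrical functionals yields, for $|\alpha|\le 1$,
\[
\Gamma(\alpha)F(x) \;=\; \int_B F\bigl(\alpha x + \sqrt{1-\alpha^2}\, y\bigr)\, d\mu(y).
\]
Being a conditional expectation, this operator is a trivial $L^p$-contraction for every $p\ge 1$, so the theorem's content is the strict gain in integrability. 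Fixing an orthonormal basis of $H$ splits $\mu$ into independent Gaussian coordinates on which $\Gamma(\alpha)$ acts factorwise through the same formula; since $q\ge p$, Minkowski's integral inequality allows a scalar $L^p\to L^q$ bound of constant $1$ to propagate across the product. This reduces the whole inequality to the one-dimensional case.

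For the scalar inequality I would use the semigroup/LSI method. Set $\alpha(t)=e^{-t}$ and let $q(t)$ solve $q(0)=p$ with $q'(t)=2(q(t)-1)$, so that $q(t)-1=(p-1)e^{2t}$. Let $G(t)=\Gamma(e^{-t})F$ and $\Phi(t)=\|G(t)\|_{q(t)}$. Differentiating $\log\Phi$ in $t$ and integrating by parts against the Ornstein--Uhlenbeck generator splits $\Phi'/\Phi$ into an entropy term and a Dirichlet-form term; Gross's Gaussian logarithmic Sobolev inequality applied to $G^{q(t)/2}$ shows the combination is nonpositive exactly when $q'(t)=2(q(t)-1)$. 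Hence $\Phi$ is nonincreasing, and matching $\alpha = e^{-t}$ with $q(t)=q$ yields $\alpha^2 = (p-1)/(q-1)$, which proves sufficiency for $|\alpha|\le \sqrt{(p-1)/(q-1)}$.

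For necessity I would test on the exponential vector $F = e^{\varepsilon\tilde h - \varepsilon^2/2}$ with $\|h\|_H=1$. Its chaos expansion $F=\sum_n (\varepsilon^n/n!)\, I_n(h^{\otimes n})$ gives $\Gamma(\alpha)F = e^{\alpha\varepsilon\tilde h - \alpha^2\varepsilon^2/2}$, and a routine Gaussian moment computation gives $\|F\|_p = e^{(p-1)\varepsilon^2/2}$ and $\|\Gamma(\alpha)F\|_q = e^{(q-1)\alpha^2\varepsilon^2/2}$; the inequality therefore forces $(q-1)\alpha^2\le p-1$, i.e.\ the sharp threshold. The main obstacle is Gross's logarithmic Sobolev inequality itself (equivalent to Nelson's original two-point hypercontractivity plus a central-limit passage); the other ingredients --- Mehler's formula, tensorization via Minkowski, and the exponential-vector test --- are routine bookkeeping once the LSI is in hand.
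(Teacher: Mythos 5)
Your plan is sound, but note first that the paper itself does not prove this theorem at all: it states the inequality and defers to the literature (``first obtained by Nelson,'' with pointers to \cite{meyer2} and \cite{unified}), so there is no in-paper argument to match yours against, and your proposal should be judged on its own merits. On those merits it is the standard and correct modern route: Mehler's formula identifies $\Gamma(e^{-t})$ with the Ornstein--Uhlenbeck semigroup, Gross's differentiation of $t\mapsto\|T_tF\|_{q(t)}$ along $q(t)-1=(p-1)e^{2t}$ gives exactly the threshold $\alpha^2=(p-1)/(q-1)$, and your exponential-vector test for necessity ($\|F\|_p=e^{(p-1)\varepsilon^2/2}$, $\|\Gamma(\alpha)F\|_q=e^{(q-1)\alpha^2\varepsilon^2/2}$) is the same computation all the references use. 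What your route buys over Nelson's original argument is modularity: everything reduces to the one-dimensional Gaussian LSI, with tensorization via Minkowski handling the abstract Wiener space. A few loose ends you should tie up, all routine: (i) your semigroup parametrization $\alpha=e^{-t}$ only covers $\alpha\ge 0$; negative $\alpha$ follows from $\Gamma(-\alpha)=\Gamma(-1)\Gamma(\alpha)$ together with the fact that $\Gamma(-1)$ is composition with the measure-preserving reflection $x\mapsto -x$, hence an isometry on every $L^q$. (ii) To differentiate $\Phi(t)=\|G(t)\|_{q(t)}$ you should first reduce to $F\ge 0$ (legitimate since Mehler's formula shows $|\Gamma(\alpha)F|\le\Gamma(\alpha)|F|$) and to bounded cylindrical $F$, then pass to the limit. (iii) The boundary case $p=1$ escapes the ODE (it forces $q(t)\equiv 1$); there the claimed threshold is $\alpha=0$, handled directly by $\Gamma(0)F=\EE F$ and $|\EE F|\le\|F\|_1$. (iv) Since Gaussian LSI is \emph{equivalent} to hypercontractivity, you must import an independent proof of it --- the two-point inequality plus the central limit theorem, as you acknowledge --- otherwise the argument is circular; alternatively, the two-point inequality plus CLT proves Nelson's theorem directly, making the LSI detour optional. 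Finally, your tensorization step and the semigroup step overlap: the LSI argument runs verbatim in infinite dimensions, so you could drop tensorization entirely, or keep it precisely so that only the one-dimensional LSI is needed --- either structure is fine, but say which you intend.
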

This inequality was first obtained by Nelson and appears in many places.
See \cite{meyer2}, \cite{unified} and the references therein for
further detail.

The following theorem is due to \"Ust\"unel and Zakai \cite{ustunel}  and
see \cite{kallenberg} for a  simpler  proof.
\begin{theorem} Let $f\in H^{\hatotimes n}$ and $g\in H^{\hatotimes m}$.  Then
$I_n(f)$ and $I_m(g)$ are independent if and only if
\begin{equation}
\langle f  \,, g \rangle _{H}=0\,,
\end{equation}
where $\langle f  \,, g \rangle _{H}\in H^{\hat\otimes {n+m-2}}$
defined by
\[
\langle f  \,, g \rangle _{H}=\sum_{n=1}^\infty \langle f  \,,
e_n\rangle _{H}\hat\otimes \langle g  \,, e_n\rangle _{H} \,.
\]
\end{theorem}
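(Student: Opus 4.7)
The engine of the proof is the product formula for multiple Wiener integrals,
\[
I_n(f)\,I_m(g)=\sum_{k=0}^{n\wedge m} k!\binom{n}{k}\binom{m}{k}\,I_{n+m-2k}(f\otimes_k g),
\]
where $f\otimes_k g\in H^{\hatotimes(n+m-2k)}$ denotes the $k$-fold contraction. The element $\langle f,g\rangle_H$ appearing in the statement is precisely $f\otimes_1 g$, and a $k$-fold contraction can be viewed as an iterated $1$-fold contraction, so the hypothesis $\langle f,g\rangle_H=0$ automatically forces $f\otimes_k g=0$ for every $k\ge 1$.

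For the sufficiency direction, assume $\langle f,g\rangle_H=0$. The product formula then collapses to $I_n(f)I_m(g)=I_{n+m}(f\hatotimes g)$, and iterating gives a representation of any monomial $I_n(f)^p I_m(g)^q$ as a sum of multiple integrals in which every contraction stays purely inside the $f$'s or purely inside the $g$'s. Taking expectations and applying the isometry \eref{e.2.7}, one sees that the joint moments factor: $\EE[I_n(f)^p I_m(g)^q]=\EE[I_n(f)^p]\,\EE[I_m(g)^q]$ for all $p,q\ge 0$. Nelson's hypercontractivity (Theorem 2.6) supplies tail bounds on a fixed Wiener chaos strong enough to make the joint law of $(I_n(f),I_m(g))$ moment-determinate, so the factorization of moments upgrades to genuine independence.

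For the necessity direction, assume $I_n(f)$ and $I_m(g)$ are independent and specialize to
\[
\EE\bigl[I_n(f)^2 I_m(g)^2\bigr]=\EE[I_n(f)^2]\,\EE[I_m(g)^2].
\]
Expand each square by the product formula, then expand the resulting product once more and use \eref{e.2.7} to retain only terms of total order zero. The subterms in which no $f$-$g$ contraction ever occurs reassemble into the right-hand side $\EE[I_n(f)^2]\,\EE[I_m(g)^2]$, while every other contribution is a strictly positive multiple of a squared contraction norm $\|f\otimes_k g\|_{H^{\hatotimes(n+m-2k)}}^2$ for some $k\ge 1$. The identity above forces each of these norms to vanish, and in particular $\|\langle f,g\rangle_H\|^2_{H^{\hatotimes(n+m-2)}}=0$, as desired.

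The principal obstacle is the combinatorics of the necessity direction: one must track, through two nested applications of the product formula, exactly which pairings produce the ``pure-$f$ times pure-$g$'' diagonal that reproduces $\EE[I_n(f)^2]\,\EE[I_m(g)^2]$, and verify that the complementary collection of diagrams really is a sum of nonnegative terms controlled by the $\|f\otimes_k g\|^2$. This diagrammatic accounting is the step where the alternative proof in \cite{kallenberg} cited in the text achieves its simplification over the original argument of \cite{ustunel}.
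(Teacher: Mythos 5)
The paper itself contains no proof of this theorem: it states the result and refers to \cite{ustunel} for the original argument and \cite{kallenberg} for a simpler one, so your proposal must be judged against the standard proofs. Your necessity direction is, in outline, exactly that standard argument, but your sufficiency direction has a genuine gap at the final step. You claim that Nelson's hypercontractivity makes the joint law of $(I_n(f),I_m(g))$ moment-determinate, so that factorization of joint moments upgrades to independence. Hypercontractivity gives $\|I_n(f)\|_{2k}\le (2k-1)^{n/2}\|I_n(f)\|_2$, i.e.\ $\mu_{2k}^{1/(2k)}=O(k^{n/2})$, and Carleman's condition $\sum_k \mu_{2k}^{-1/(2k)}=\infty$ then holds only for chaos of order $n\le 2$. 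For order $\ge 3$ the tails behave like $\exp(-c|x|^{2/n})$, determinacy is not guaranteed by any general criterion, and laws in this tail regime can genuinely fail to be determined by their moments (the classical example being the cube of a standard Gaussian). So the upgrade from factorized moments to independence is unjustified whenever $n$ or $m$ is at least $3$; your argument is complete only for $n,m\le 2$. The standard repair avoids moments entirely: pairing $f\otimes_1 g=0$ with $h_1\otimes h_2$, $h_1\in H^{\otimes(n-1)}$, $h_2\in H^{\otimes(m-1)}$, gives $\langle\, \langle f,h_1\rangle,\langle g,h_2\rangle\,\rangle_H=0$, so the closed subspaces $K_f,K_g\subset H$ spanned by the one-leg contractions of $f$ and $g$ are orthogonal; symmetry forces $f\in K_f^{\hatotimes n}$ and $g\in K_g^{\hatotimes m}$, hence $I_n(f)$ and $I_m(g)$ are measurable with respect to the $\si$-fields generated by the Gaussian families $\{\tilde h: h\in K_f\}$ and $\{\tilde h: h\in K_g\}$, which are independent. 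This is the route of \cite{ustunel}, and it is shorter than your moment computation even where that computation works.

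A second point deserves explicit attention rather than the silent correction you make: the paper defines $\langle f,g\rangle_H$ with a symmetrized tensor product, and with that literal reading the ``if'' direction of the theorem is false. Take $e_1\perp e_2$ orthonormal, $f=e_1\otimes e_1-e_2\otimes e_2$ and $g=e_1\hatotimes e_2$; then $\sum_a \langle f,e_a\rangle\hatotimes\langle g,e_a\rangle=0$, yet $f\otimes_1 g=\frac12(e_1\otimes e_2-e_2\otimes e_1)\ne 0$ and $I_2(f)=\tilde e_1^2-\tilde e_2^2$, $I_2(g)=\tilde e_1\tilde e_2$ are dependent ($\EE[I_2(f)^2I_2(g)^2]=12\ne 4=\EE[ I_2(f)^2]\,\EE [I_2(g)^2]$). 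The correct condition is the \emph{unsymmetrized} contraction $f\otimes_1 g=0$, and your identification of $\langle f,g\rangle_H$ with $f\otimes_1 g$, as well as your claim that $k$-fold contractions are iterated $1$-fold contractions (true for symmetric $f,g$ and unsymmetrized contractions, false after symmetrizing), only work in that reading. The same distinction is the real content hidden in your ``diagrammatic accounting'' for necessity: expanding $\EE[(I_n(f)I_m(g))^2]$ by the product formula produces norms of \emph{symmetrized} contractions, whose vanishing is strictly weaker than what you need, as the example shows. The step that rescues the argument is the identity
\[
(n+m)!\,\bigl\|f\,\widetilde{\otimes}\, g\bigr\|^2
= n!\,m!\sum_{r=0}^{n\wedge m}\comb{n}{r}\comb{m}{r}\,\|f\otimes_r g\|^2\,,
\]
where $\widetilde{\otimes}$ denotes the symmetrization of $f\otimes g$: it converts the $k=0$ term of the expansion into a sum of unsymmetrized contraction norms, so that independence forces $\|f\otimes_r g\|=0$ for all $r\ge 1$. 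Without isolating this identity your accounting could terminate at the symmetrized contractions and prove the wrong statement, so this is a missing idea, not mere bookkeeping.
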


For an  $f\in H^{\hatotimes n}$ satisfying some more
conditions on the existence of trace of $f$,
the {\it multiple Stratonovich integral}  $S_n(f)$ can also be introduced
in the following way:

Let $\{e_1, e_2, \dots\}$ be an orthonormal basis of $H$.
Let $f\in H^{\hatotimes n}$ and consider the following random variable:
\begin{equation}
S_n^N(f )=\sum_{k_1\,, \cdots\,,  k_n=1}^N \langle f  \,,
e_{k_1}\hat\otimes \cdots\hat\otimes e_{k_n}\rangle_{H^{\hat\otimes
n}} \tilde e_{k_1} \cdots \tilde e_{k_n}\,. \label{e.2.sn}
\end{equation}

\begin{definition} If as $N\rightarrow \infty$, $S_n^N(f )$ converges in $L^2$,
then we say the multiple Stratonovich integral of $f $ exists. The limit is called
the multiple Stratonovich integral of $f $ and is  denoted by $S_n(f )$.
\end{definition}
\begin{definition} Denote
\[
\tr^{k, N}  f_n =\sum_{i_1, \cdots, i_k=1}^N  \langle f\,, e_{i_1}\hatotimes  e_{i_1} \hatotimes
\cdots \hatotimes e_{i_k}\hatotimes e_{i_k}\rangle_{H^{\hatotimes 2k}}
\]
which is considered as an element in $H^{\hatotimes (n-2k)}$.
If  as $N\rightarrow \infty$, $\tr^{k, N}  f_n $ converges in $H^{\hatotimes (n-2k)}$, then we say the  trace
of order $k$ exists and denote it by
\[
\tr^k f=\sum_{i_1, \cdots, i_k=1}^\infty \langle f\,, e_{i_1}\hatotimes  e_{i_1} \hatotimes
\cdots \hatotimes e_{i_k}\hatotimes e_{i_k}\rangle_{H^{\hatotimes 2k}}\,.
\]
\end{definition}

If the traces   of order $k$ of $f$ exist for all $k\le n/2$,
then the multiple Stratonovich integral of $f $, namely $S_n(f )$, exists and the
 following Hu-Meyer formula holds
\begin{eqnarray}
S_n(f)&=&\sum_{k\le n/2} \frac{n!}{2^k k! (n-2k)!} I_{n-2k}(\tr^k f)\nonumber\\
I_n(f)&=&\sum_{k\le n/2} \frac{(-1)^k n!}{2^k k! (n-2k)!} S_{n-2k}(\tr^k f)
\label{e.2.15}
\end{eqnarray}

For this result and other results see
\cite{bud},  \cite{humeyer3},  \cite{johnson}  and  the references therein.

\begin{example}  If $f_1\,, \cdots\,, f_n\in H$, then
\[
S_n( f_1\hatotimes  f_2\hatotimes \cdots \hatotimes  f_n\ )= \tilde f_1  \tilde f_2\cdots \tilde f_n\,.
\]
\end{example}

\setcounter{equation}{0}

\section{Malliavin calculus}
If $f_n\in H^{\hatotimes n}$ and $g\in H$,  then $\langle f_n\,, g\rangle_H$
is an element in  $H^{\hatotimes (n-1)}$. For  $f_n\in H^{\hatotimes n}$,
we define
\[
D_g I_n(f_n) =nI_{n-1}\left( \langle f_n\,, g\rangle_H\right)\,.
\]
If for almost every $x\in B$, the above right hand   is a continuous
functional of $g$ on $H$, then
\[
D I_n(f_n) =nI_{n-1}\left(   f_n \right)
\]
is a random variable with values in $H$.
We can extend $D_g$ and $D$ to general random variable $F$
by linearity and limiting argument.

It is easy to check that $D(FG)=FDG+GDF$.

In the same way we can introduce higher order derivatives $D^k F:
B\rightarrow H^{\hat\otimes k}$.

The space $D^{k, p}$ is defined as
\[
D^{k, p}=\left\{ F: B\rightarrow \RR\,;\ \
\|F\|_{k,p}^p:=\sum_{i=0}^k \EE \|D^i F\|_{H^{\hat\otimes
i}}^p<\infty\right\}\,.
\]
To describe this space, one may introduce the Ornstein-Uhlenbeck operator
$L$ defined by
\[
LF=\sum_{n=1}^\infty n F_n\,,\quad \hbox{if $F$ has the chaos expansion}\quad  F=\sum_{n=1}^\infty   F_n
\]
(which is the generator of the semigroup $P_tF=\Ga(e^{-t}) F$).

The following result is called the Meyer's inequality (see \cite{meyer3} and
also \cite{pisier} for a simpler analytic  proof).

\begin{theorem}    There is a constant $c_{k, p}$ and $C_{k,p}$ such that
\[
c_{k,p} \|(L+1)^{k/2} F\|_p\le \|F\|_{k,p}\le C_{k,p} \|(L+1)^{k/2} F\|_p\,.
\]
\end{theorem}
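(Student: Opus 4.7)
The plan is to separate the theorem into three pieces: the $L^2$ case which is a direct chaos-expansion identity, the reduction of the $L^p$ case for $p\ne 2$ to the boundedness of a Riesz transform, and the bootstrap from $k=1$ to arbitrary $k$.

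For the $L^2$ case, using $F=\sum_{n\ge 0}I_n(f_n)$, the operator $(L+I)^{k/2}$ multiplies the $n$-th chaos by $(n+1)^{k/2}$, while $D^iF$ lies in the $(n-i)$-th chaos of $L^2(B;H^{\hat\otimes i})$ with squared norm $\sum_{n\ge i}\frac{n!}{(n-i)!}\,n!\,\|f_n\|^2_{H^{\hat\otimes n}}$. Summing over $i\le k$ and comparing with $\sum_n(n+1)^k n!\|f_n\|^2$ gives both inequalities with explicit constants, so the theorem holds at $p=2$ without any analytic machinery.

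For $p\ne 2$ the heart of the matter is the $L^p$-boundedness of the Riesz transform $R=D(L+I)^{-1/2}$ from $L^p$ into $L^p(B;H)$. Once this is known, setting $G=(L+I)^{1/2}F$ gives $DF=RG$ and hence $\|DF\|_p\le C_p\|(L+I)^{1/2}F\|_p$. To prove the boundedness I would use the commutation $DP_t=e^{-t}P_tD$, which is immediate from $P_t=\Ga(e^{-t})$ acting on each chaos, together with the subordination identity
\[
(L+I)^{-1/2}=\frac{1}{\sqrt{\pi}}\int_0^\infty s^{-1/2}e^{-s}P_s\,ds
\]
in order to write $R$ as a weighted average of OU semigroups composed with $D$. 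Nelson's hypercontractivity (the preceding theorem) then provides the $L^p\to L^q$ mapping properties of $P_s$ that are needed to close the estimate; Pisier's analytic proof, cited in the statement, carries this out with Littlewood--Paley and Khintchine-type inequalities in place of Meyer's original martingale-transform argument. The reverse direction follows by duality from the integration-by-parts formula, which identifies $D^*$ as the divergence operator $\delta$ and lets one dominate $(L+I)^{1/2}F$ in $L^p$ by $\|F\|_p+\|DF\|_{L^p(H)}$.

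The main obstacle is precisely the $L^p$-boundedness of $R$ for $p\ne 2$; everything else is bookkeeping. Once $k=1$ is in hand, the general case follows by iteration: the commutation $D(L+I)^{1/2}=(L+2I)^{1/2}D$, derived from $DL=(L+I)D$ on each chaos, lets one peel off a single factor of $D$ against a single factor of $(L+I)^{1/2}$ at a shifted spectral parameter, each step costing a constant from the $k=1$ Riesz bound. A straightforward induction then yields $c_{k,p}\|(L+I)^{k/2}F\|_p\le\|F\|_{k,p}\le C_{k,p}\|(L+I)^{k/2}F\|_p$ for all $k\ge 1$ and $1<p<\infty$.
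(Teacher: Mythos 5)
The paper itself does not prove this theorem: it is quoted as Meyer's inequality with pointers to \cite{meyer3} and \cite{pisier}, so your proposal has to be measured against those proofs. Your overall architecture is the standard one and most of it is sound: the $L^2$ case via the chaos identity $\EE\|D^iI_n(f_n)\|^2=\frac{n!}{(n-i)!}\,n!\,\|f_n\|^2$ and the comparison $\sum_{i\le k}\frac{n!}{(n-i)!}\asymp (n+1)^k$ is correct; the reduction to the Riesz transform $R=D(L+I)^{-1/2}$, the duality argument for the converse, and the induction on $k$ via $DL=(L+I)D$, hence $D(L+I)^{1/2}=(L+2I)^{1/2}D$, are all as in the literature (though in the induction you silently need an $L^p$ multiplier theorem to trade $(L+2I)^{1/2}$ for $(L+I)^{1/2}$ at the cost of a constant; the multiplier $((n+2)/(n+1))^{1/2}$ is handled by Meyer's multiplier theorem, which should be stated).

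The genuine gap is in the one hard step, the $L^p$ bound for $R$, where your proposed mechanism fails as written. Subordination plus hypercontractivity, used with absolute-value estimates, cannot close the argument: the sharp small-time bound for the OU semigroup is $\|DP_sF\|_p\le C\,s^{-1/2}\|F\|_p$ as $s\to 0$, so the subordination integral is controlled only by $\|F\|_p\int_0^1 s^{-1/2}\cdot s^{-1/2}\,ds=\|F\|_p\int_0^1 s^{-1}\,ds$, which diverges logarithmically at $s=0$. Nelson's theorem gives no help there, since for small $s$ the admissible exponent $q=1+e^{2s}(p-1)$ barely exceeds $p$: hypercontractive gain in integrability cannot pay for a derivative at short times. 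What closes the estimate in the cited proofs is cancellation, not size. Meyer controls $R$ through Littlewood--Paley $g$-function inequalities for the Ornstein--Uhlenbeck semigroup (originally by martingale methods), while Pisier --- whom you slightly mischaracterize, as his proof uses no Littlewood--Paley theory --- represents the Riesz transform as a principal-value integral in a Gaussian rotation parameter, pairing $x\cos\theta+y\sin\theta$ against the odd kernel $1/\sin\theta$, and transfers the $L^p$ bound from the classical vector-valued Hilbert transform. Without one of these cancellation mechanisms, your central step is not a proof sketch but a restatement of the difficulty that makes the theorem nontrivial.
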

Meyer's inequality can be used to give a detailed description of $D^{k,p}$ even for non integer
$k$.

\begin{example} Let $f\in H$. Then
\[
\vare(f):=\sum_{n=0}^\infty \frac1{n!}I_n(f^{\otimes
n})=\exp\left(\tilde f-\frac12 \|f\|_H^2\right)\,
\]
is called an  {\it exponential vector} (in $L^2(B, \cB, \mu)$).

For an exponential vector $\vare(f)$ and a $g\in H$, we have
\[
D_g \vare(f)=\vare(f)\langle f, g\rangle\,,\quad D  \vare(f)=\vare(f)  f \,.
\]
\end{example}

 Let
\begin{eqnarray*}
\cS&=&\Bigg\{ F=h(\tilde f_1\,, \cdots \,, \tilde f_n)\,, \ \
f_1, \cdots, f_n\in H\quad {\rm and}\quad \hbox{ $h$ is smooth }\\
&&\qquad\quad \hbox{function on $\RR^n$\,, $n\ge 1$}\Bigg\}\,.
\end{eqnarray*}

\begin{example}
If $F=h(\tilde f_1\,, \cdots \,, \tilde f_n)$ is in $\cS$
with   $h$ being  of polynomial growth,
then $F\in D_{k,p}$ for all $k\ge 0$ and $p\ge 1$ and
\[
D_gF=\sum_{i=1}^n \frac{\partial h}{\partial x_i}
(\tilde f_1\,, \cdots \,, \tilde f_n) \langle f_i\,, g\rangle
\]
and
\[
D F=\sum_{i=1}^n \frac{\partial h}{\partial x_i} (\tilde f_1\,, \cdots \,, \tilde f_n)   f_i\,.
\]
\end{example}

\begin{definition}
If $F:B\rightarrow H$ and there is a random variable $Z$ such that
\begin{equation}
\EE\left( \langle F\,, DG\rangle\right)=\EE(ZG)\quad \forall \ G\in \cS\,,
\end{equation}
Then we say the divergence of $F$ exists and we denote it by $Z=\de(F)$.
\end{definition}

This means that the divergence operator $\de$ is the adjoint operator of the derivative operator
$D$.

\begin{example} If $g\in H$, then $\de(g)=\tilde g$.
\end{example}

\setcounter{equation}{0}

\section{   Wick product}

\begin{definition}
If $f_n\in H^{\hat\otimes n}$ and $g_m\in  H^{\hat\otimes m}$, then
the Wick product of $I_n(f_n)$ and $ I_m(g_m)$ is defined as
\[
I_n(f_n)\diamond I_m(g_m)=I_{n+m}(f_n\hat\otimes g_m)\,,
\]
where $f_n\hat\otimes g_m$ denotes the symmetric tensor product of
$f_n$ and $g_m$. If $F=\sum_{n=0}^{N_1}  I_n(f_n)$ and
$G=\sum_{m=0}^{N_2} I_m(g_m)$, then we define
\[
F\diamond G=\sum_{n=0}^{N_1}\sum_{m=0}^{N_2} I_{n+m}(f_n\hat\otimes
g_m)\,,
\]
\end{definition}
By a limiting  argument, we can extend the Wick product to general random variables
(see \cite{meyer} and \cite{kallianpur} for example).
\begin{remark}
 Of course, when we use ``by limiting argument" the definition depends on the
 topology that we   use. We can approximate $F$ and $G$ by finite combination
 of multiple integrals, $F_N$ and $G_N$, and define $F\diamond G$ as the limit of
 $F_N\diamond G_N$. Different choices of the topology (for example, in probability,
 $L^p$, almost surely etc) will lead to different
 definitions  of the Wick product.  In this paper, we shall use the $L^2$ limit.
 \end{remark}

It is clear that $F, G\in L^2$ does not imply that $F\diamond G$ is
a well-defined object in $L^2$. Now we present a sufficient
condition on $F$ and $G$ such that $F\diamond G\in L^2$. To this
end, we need to introduce some new norms.

If $F=\sum_{n=0}^\infty I_n(f_n)$, we define
\[
\|F\|_{(r)}^2 =\|\Ga(r) F\|^2=\sum_{n=0}^\infty n! r^{2n}
\|f_n\|_2^2\,.
\]

The following proposition can be found in \cite{yan95}(Theorem 3.1).
\begin{proposition}  Let $\displaystyle \frac1{p^2}+\frac1{q^2}= \frac1{r^2}$, $p, q,
r>0$. Then
\[
\|F\diamond G\|_{(r)} \le  \|F\|_{(p)}  \|G\|_{(q)}
\]
\end{proposition}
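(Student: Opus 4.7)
My plan is to expand $F\diamond G$ in its chaos decomposition by collecting terms of equal total order, estimate each chaos by the triangle inequality plus the fact that symmetrization is a contraction, and then apply a weighted Cauchy–Schwarz whose weights are tuned so that the hypothesis $p^{-2}+q^{-2}=r^{-2}$ makes a key sum collapse.

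\medskip

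\noindent\textbf{Step 1 (chaos decomposition of the product).} Write $F=\sum_n I_n(f_n)$ and $G=\sum_m I_m(g_m)$. By the definition of the Wick product and reindexing by the total degree $k=n+m$,
\[
F\diamond G=\sum_{k=0}^\infty I_k(h_k),\qquad h_k:=\sum_{n=0}^k f_n\hatotimes g_{k-n}.
\]
By the orthogonality of chaoses built into the norm $\|\cdot\|_{(r)}$,
\[
\|F\diamond G\|_{(r)}^2=\sum_{k=0}^\infty k!\,r^{2k}\,\|h_k\|_{H^{\hatotimes k}}^2.
\]

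\noindent\textbf{Step 2 (reduction to a scalar convolution).} The symmetrization $\hatotimes$ is a contraction from $H^{\otimes k}$ to $H^{\hatotimes k}$, so $\|f_n\hatotimes g_{k-n}\|\le\|f_n\|\,\|g_{k-n}\|$. Setting $a_n=\|f_n\|_{H^{\hatotimes n}}$ and $b_m=\|g_m\|_{H^{\hatotimes m}}$, the triangle inequality gives
\[
\|h_k\|^2\le\Big(\sum_{n=0}^k a_n b_{k-n}\Big)^2.
\]

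\noindent\textbf{Step 3 (weighted Cauchy–Schwarz).} This is the crux. I factor each term as
\[
a_n b_{k-n}=\Big(\sqrt{n!\,p^{2n}}\,a_n\cdot\sqrt{(k-n)!\,q^{2(k-n)}}\,b_{k-n}\Big)\cdot\frac{1}{\sqrt{n!\,p^{2n}(k-n)!\,q^{2(k-n)}}},
\]
and apply Cauchy–Schwarz to obtain
\[
\Big(\sum_{n=0}^k a_nb_{k-n}\Big)^{2}\le\Big(\sum_{n=0}^k n!\,(k-n)!\,p^{2n}q^{2(k-n)}a_n^2 b_{k-n}^2\Big)\cdot\Big(\sum_{n=0}^k\frac{1}{n!(k-n)!\,p^{2n}q^{2(k-n)}}\Big).
\]
Multiplying the second factor by $k!\,r^{2k}$ rewrites it as a binomial expansion:
\[
k!\,r^{2k}\sum_{n=0}^k\frac{1}{n!(k-n)!\,p^{2n}q^{2(k-n)}}=r^{2k}\sum_{n=0}^k\binom{k}{n}(p^{-2})^n(q^{-2})^{k-n}=r^{2k}\big(p^{-2}+q^{-2}\big)^k=1,
\]
precisely because $p^{-2}+q^{-2}=r^{-2}$. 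This is the only place the hypothesis on $p,q,r$ is used, and it is what makes the estimate work.

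\medskip

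\noindent\textbf{Step 4 (assembling and summing).} Combining Steps 1–3,
\[
\|F\diamond G\|_{(r)}^2\le\sum_{k=0}^\infty\sum_{n=0}^k n!\,(k-n)!\,p^{2n}q^{2(k-n)}a_n^2b_{k-n}^2=\Big(\sum_n n!\,p^{2n}a_n^2\Big)\Big(\sum_m m!\,q^{2m}b_m^2\Big)=\|F\|_{(p)}^2\|G\|_{(q)}^2,
\]
after exchanging the order of summation (all terms nonnegative). Taking square roots finishes the proof; the case of infinite right-hand side is trivial, and for finite sums $F$, $G$ everything is legitimate, so a standard density/monotone-limit argument extends the inequality to all $F$, $G$ with finite $\|\cdot\|_{(p)}$, $\|\cdot\|_{(q)}$ norms.

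\medskip

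The main substantive step is Step 3; once one sees that the exponent identity $p^{-2}+q^{-2}=r^{-2}$ is exactly what collapses the binomial sum to $1$, the rest is routine bookkeeping. The slight subtlety is choosing the correct weights in Cauchy–Schwarz so that both the $k!\,r^{2k}$ factor and the factorial/power weights of the target norms $\|\cdot\|_{(p)},\|\cdot\|_{(q)}$ reappear naturally; I expect this to be the only place where an incorrect attempt would fail.
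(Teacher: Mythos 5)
Your proof is correct and is essentially the paper's own argument: the paper likewise bounds $\sqrt{n!}\,\|h_n\|$ via the triangle inequality and the contractivity of symmetrization, then applies the same weighted Cauchy--Schwarz, writing the weight as $a^k$ with $a=r^{-2}q^2-1=q^2/p^2$ so that $\sum_{k+j=n}\comb{n}{k}a^k=(1+a)^n$ performs exactly your binomial collapse under the hypothesis $p^{-2}+q^{-2}=r^{-2}$. The two computations differ only in bookkeeping (the paper substitutes $1+a^{-1}=r^{-2}p^2$ and $1+a=r^{-2}q^2$ at the end rather than invoking the binomial theorem on the reciprocal-weight sum) and arrive at the identical termwise bound $n!\,r^{2n}\|h_n\|^2\le\sum_{k+j=n}k!\,p^{2k}\|f_k\|^2\,j!\,q^{2j}\|g_j\|^2$.
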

\begin{proof}
Let $F=\sum_{n=0}^\infty I_n(f_n)$ and  $G=\sum_{n=0}^\infty
I_n(g_n)$. Denote $h_n=\sum_{k+j=n} f_k\hat\otimes g_j $. Let
$a=r^{-2}q^2-1$. Then $1+a^{-1}=r^{-2}p^2$. We have
\begin{eqnarray*}
\sqrt{n!} \|h_n\|&\le& \sqrt{n!}\sum_{k+j=n}\|f_k\|\|g_j\|
 =  \sum_{k+j=n}\comb{n}{k}^{1/2}\sqrt{k!j!}\|f_k\|\|g_j\| \\
&\le&\left( \sum_{k+j=n} \comb{n}{k}a^k\right)^{1/2}\left( \sum_{k+j=n} a^{-k}k!j! \|f_k\|^2  \|g_j\|^2 \right)^{1/2} \\
&=& \left( \sum_{k+j=n}(1+a)^na^{-k}k!j! \|f_k\|^2  \|g_j\|^2\right)^{1/2}\\
&=& \left( \sum_{k+j=n}(1+a^{-1})^kk! \|f_k\|^2(1+a)^jj!
\|g_j\|^2\right)^{1/2},
\end{eqnarray*}
which implies the result.
\end{proof}

As a direct consequence of the above proposition we obtain a
condition on $F$ and $G$ such that $F\diamond G$ is in $L^2$.

\begin{theorem} If $\Ga(p)F\,, \Ga(q) G$ are in $ L^2$ with
$\frac1{p^2}+\frac1{q^2}= 1$, then $F\diamond G$ exists as an
element in $L^2$.
\end{theorem}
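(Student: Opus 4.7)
The plan is to recognize this as an immediate corollary of the preceding Proposition with the special choice $r=1$. Since $\Gamma(1)$ is the identity operator on $L^2$, by the defining formula $\|F\|_{(r)}^2 = \sum_{n=0}^\infty n!\, r^{2n}\|f_n\|^2$ we have $\|F\|_{(1)}^2 = \EE(F^2)$, so the $\|\cdot\|_{(1)}$ norm coincides with the ordinary $L^2$ norm. The hypothesis $\frac{1}{p^2}+\frac{1}{q^2}=1$ is exactly the constraint of the Proposition when $r=1$, so the inequality reads
\[
\|F\diamond G\|_{L^2} \le \|\Gamma(p)F\|_{L^2}\,\|\Gamma(q)G\|_{L^2},
\]
and the right-hand side is finite by hypothesis.

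The only nontrivial point is that the Proposition was established for $F$ and $G$ with absolutely convergent chaos decompositions (or equivalently, a priori sense needs to be given to $F\diamond G$ before one takes an $L^2$ norm of it). To handle this, I would first truncate: set $F_N = \sum_{n=0}^N I_n(f_n)$ and $G_M = \sum_{m=0}^M I_m(g_m)$, for which $F_N\diamond G_M$ is well-defined by the base definition. Using bilinearity of the Wick product on finite chaos sums together with the Proposition, I would estimate, for $N'>N$ and $M'>M$,
\[
\|F_{N'}\diamond G_{M'}-F_N\diamond G_M\|_{L^2}\le \|F_{N'}-F_N\|_{(p)}\|G_{M'}\|_{(q)}+\|F_N\|_{(p)}\|G_{M'}-G_M\|_{(q)}.
\]
The hypothesis $\Gamma(p)F, \Gamma(q)G\in L^2$ is equivalent to $\|F\|_{(p)}<\infty$ and $\|G\|_{(q)}<\infty$, so the truncation tails $\|F_{N'}-F_N\|_{(p)}$ and $\|G_{M'}-G_M\|_{(q)}$ tend to $0$ as the indices grow. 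Hence $\{F_N\diamond G_M\}$ is Cauchy in $L^2$ in the iterated-limit sense, and its $L^2$ limit is the element defined to be $F\diamond G$ by the limiting argument introduced after the definition of Wick product.

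Passing to the limit in the Proposition applied to $F_N\diamond G_M$, I obtain the uniform bound $\|F\diamond G\|_{L^2}\le \|\Gamma(p)F\|_{L^2}\,\|\Gamma(q)G\|_{L^2}<\infty$, which is the desired conclusion. There is no real obstacle here beyond bookkeeping; the substantive estimate was already done in the Proposition via the Cauchy--Schwarz splitting with weight $a=r^{-2}q^2-1$, and the present theorem is essentially a rephrasing of its $r=1$ instance together with a standard density argument.
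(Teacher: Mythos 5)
Your proof is correct and takes essentially the same route as the paper, which states this theorem as a direct consequence of the preceding Proposition specialized to $r=1$, where $\|\cdot\|_{(1)}$ coincides with the $L^2$ norm and $\|F\|_{(p)}=\|\Gamma(p)F\|_{L^2}$, $\|G\|_{(q)}=\|\Gamma(q)G\|_{L^2}$. Your truncation and Cauchy-sequence bookkeeping simply makes explicit the routine limiting argument (in the $L^2$ topology fixed in the paper's Remark on the definition of the Wick product) that the paper leaves implicit.
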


A useful tool in studying the Wick product is the so-called
$S$-transformation.
\begin{definition} Let $F\in L^p$ for some $p>1$. Then for any $\xi\in H$
$F(\cdot+\xi):\ B\rightarrow \RR$ is well-defined integrable random
variable. The following functional from $H$ to $\RR$
\begin{equation}
S(F)(\xi)=\EE \left[ F(\cdot+\xi)\right] \,,\quad \forall \ \xi\in
H\,.
\end{equation}
is called the {\it $S$-transformation} of $F$.
\end{definition}

By Cameron-Martin theorem we have
\begin{equation}
S(F)(\xi)=\EE [ F\vare(\xi)] \,,\quad \forall \ \xi\in H\,.
\end{equation}
Consequently, if $F=\sum_{n=0}^\infty  I_n(f_n)$, then
\begin{equation}
S(F)(\xi)=\sum_{n=0}^\infty n!\langle f_n, \xi^{\otimes n}\rangle
\,,\quad \forall \ \xi\in H\,.
\end{equation}
This implies that
\begin{equation}
S(F\diamond G)(\xi)=S(F)(\xi) S(G)(\xi)
\end{equation}
for suitable $F$ and $G$. For $f\in H$, since
$\vare(f)=\sum_{n=0}^\infty \frac1{n!}I_n(f^{\otimes n})$, we have
$$S\vare(f)(\xi)=\exp\{\langle f, \xi\rangle\},$$ and consequently
\begin{equation}
\vare(f)\diamond \vare(g)=\vare(f+g)\,, \quad \forall \ f,g\in
H\,.\label{e.4.5}
\end{equation}
We refer to \cite{huangyan} \cite{yan2},  \cite{kon}
and the references therein for more details in the framework of
white noise analysis.

\begin{proposition} \label{t.4.2} If $f_1\,, f_2\in H$
 are two unit  vectors which are  orthogonal, then
\[
H_n(\tilde f_1)\diamond H_m(\tilde f_2)= H_n(\tilde f_1) H_m(\tilde
f_2).\]
\end{proposition}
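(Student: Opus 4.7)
The plan is to derive the identity from the multiplicativity of exponential vectors, exploiting the fact that for orthogonal vectors the Wick exponential and the ordinary exponential coincide. For arbitrary $s,t\in\RR$, equation (4.5) gives $\vare(sf_1)\diamond \vare(tf_2)=\vare(sf_1+tf_2)$. On the other hand, since $\|f_1\|_H=\|f_2\|_H=1$ and $\langle f_1,f_2\rangle_H=0$, one has $\|sf_1+tf_2\|_H^2=s^2+t^2$, so
\begin{equation*}
\vare(sf_1)\,\vare(tf_2)=\exp\!\left(s\tilde f_1+t\tilde f_2-\tfrac12(s^2+t^2)\right)=\vare(sf_1+tf_2).
\end{equation*}
Combining the two equalities yields $\vare(sf_1)\diamond \vare(tf_2)=\vare(sf_1)\,\vare(tf_2)$ for all $s,t\in\RR$.

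Next I would expand both sides in $s$ and $t$ via the Hermite generating function $e^{sx-s^2/2}=\sum_{n\ge 0}\tfrac{s^n}{n!}H_n(x)$. Because $\|f_1\|=\|f_2\|=1$, this gives $\vare(sf_1)=\sum_{n}\tfrac{s^n}{n!}H_n(\tilde f_1)$ and $\vare(tf_2)=\sum_{m}\tfrac{t^m}{m!}H_m(\tilde f_2)$. Substituting these expansions into the identity $\vare(sf_1)\diamond \vare(tf_2)=\vare(sf_1)\,\vare(tf_2)$ and equating the coefficients of $s^n t^m$ yields directly
\begin{equation*}
H_n(\tilde f_1)\diamond H_m(\tilde f_2)=H_n(\tilde f_1)\,H_m(\tilde f_2).
\end{equation*}

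The point that requires justification is the interchange of the Wick product with the infinite sums defining the two exponential vectors. This is the only real obstacle, and it is handled by the preceding norm estimate: since $\|\vare(sf_1)\|_{(r)}^2=\sum_n \tfrac{s^{2n}}{n!}\,r^{2n}=e^{s^2 r^2}<\infty$ for every $r>0$ (and similarly for $\vare(tf_2)$), taking $p=q=\sqrt 2$ (so $1/p^2+1/q^2=1$) and approximating each exponential vector by its polynomial partial sums lets one pass the identity to the $L^2$ limit. A cleaner route that bypasses convergence altogether is via the $S$-transform: using the independence of $\tilde f_1$ and $\tilde f_2$ and the generating function for $H_n$, a direct calculation gives $S(H_n(\tilde f_1)H_m(\tilde f_2))(\xi)=\langle f_1,\xi\rangle^n\langle f_2,\xi\rangle^m$, which coincides with $S(H_n(\tilde f_1))(\xi)\,S(H_m(\tilde f_2))(\xi)=S(H_n(\tilde f_1)\diamond H_m(\tilde f_2))(\xi)$, and injectivity of $S$ closes the proof.
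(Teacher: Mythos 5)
Your proof is correct, but be aware that the paper states this proposition with no proof at all; the closest in-paper argument is the computation immediately following it, where formula \eref{e.4.6} is derived by exactly your technique --- expanding the Wick and ordinary products of the exponential vectors $\exp\{s\tilde f-s^2\|f\|^2/2\}$ and $\exp\{t\tilde g-t^2\|g\|^2/2\}$ and comparing coefficients of $s^mt^n$ --- so your route is essentially the paper's own, specialized to $\langle f_1,f_2\rangle_H=0$, $\|f_1\|_H=\|f_2\|_H=1$, where the cross factor $\exp\{-st\langle f_1,f_2\rangle_H\}$ equals $1$ and only the $p=0$ term survives (equivalently, the proposition is the case $f_n=f_1^{\hat\otimes n}$, $g_m=f_2^{\hat\otimes m}$ of the later proposition on independent single-chaos variables, since the contraction $\langle f_1^{\hat\otimes n},f_2^{\hat\otimes m}\rangle_H$ vanishes and the \"Ust\"unel--Zakai criterion applies). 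What you add, and what the paper leaves entirely implicit, is the justification of the term-by-term passage to the limit: your estimate $\|\vare(sf_1)\|_{(r)}^2=e^{s^2r^2}$ combined with the bound $\|F\diamond G\|_{(r)}\le\|F\|_{(p)}\|G\|_{(q)}$ at $p=q=\sqrt2$, $r=1$ does make the $L^2$ interchange legitimate, and your $S$-transform alternative is also sound, with injectivity of $S$ on $L^2$ following from the density of the linear span $\vE$ of exponential vectors, which the paper itself invokes in proving Proposition \ref{t.4.8}. One small point in your favor worth recording: your value $S(H_n(\tilde f_1))(\xi)=\langle f_1,\xi\rangle^n$ is the correct one; the paper's displayed chaos formula $S(F)(\xi)=\sum_n n!\langle f_n,\xi^{\otimes n}\rangle$ carries a spurious $n!$ (it should read $\sum_n\langle f_n,\xi^{\otimes n}\rangle$, as the consistency check $S\vare(f)(\xi)=e^{\langle f,\xi\rangle}$ shows), so do not be misled by the apparent discrepancy.
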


Under some suitable condition on $F$ and $G$, we have
\begin{eqnarray}
D_g(F\diamond G)
&=& D_gF\diamond G+  F\diamond D_gG\\
D (F\diamond G)
&=& D F\diamond G+  F\diamond D G
\end{eqnarray}

The following proposition can be  found in \cite{yan95}  (Theorem 5.5).

\begin{proposition} \label{t.4.8}
If  $g\in  H$,   $F\in  L^2(B, \cB, \mu)$ and  if
$ D_g F$ exists and is in  $  L^2(B, \cB, \mu)$, then $F\diamond \tilde g$ exists
in $L^1(B, \cB, \mu)$ and
\begin{equation}\label{e.4.4}
F\diamond \tilde g
=F \tilde g-  D_g F =F \tilde g-  \langle DF, g\rangle _H \,.
\end{equation}
\end{proposition}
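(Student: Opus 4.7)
The plan is to reduce to finite chaos by truncation and then apply the classical product formula for single- and $n$-fold Wiener integrals,
\[
I_n(f_n)\,I_1(g) \;=\; I_{n+1}(f_n\hatotimes g) \;+\; n\,I_{n-1}\bigl(\langle f_n,g\rangle_H\bigr),
\]
which is exactly the decomposition $F\tilde g = F\diamond\tilde g + D_g F$ at a single chaos level. Note that $\tilde g = I_1(g)$, so once this formula is in hand the identity to be proved is essentially immediate on each chaos and the work reduces to a limit passage.

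Expand $F = \sum_{n\ge 0} I_n(f_n)$ and set $F_N = \sum_{n=0}^N I_n(f_n)$. Each $F_N$ is a finite sum of multiple integrals, so by the definition of $\diamond$ at the start of Section~4 one has $F_N\diamond\tilde g = \sum_{n=0}^N I_{n+1}(f_n\hatotimes g)$; applying the product formula termwise and summing gives
\[
F_N\,\tilde g \;=\; F_N\diamond\tilde g \;+\; D_g F_N,
\]
where $D_g F_N = \sum_{n=1}^N n\,I_{n-1}(\langle f_n, g\rangle_H)$ by the formula for $D_g$ on a single chaos recalled in Section~3. Since $F_N\to F$ in $L^2$ and $\tilde g\in L^2$, Cauchy--Schwarz gives $F_N\tilde g\to F\tilde g$ in $L^1$. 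The hypothesis that $D_g F$ exists in $L^2$ means precisely that $D_g F_N \to D_g F$ in $L^2$, because $D_g$ is by construction the closure of its action on finite chaos, so $F$ lies in its domain if and only if the chaos series $\sum n\,I_{n-1}(\langle f_n,g\rangle_H)$ converges in $L^2$, in which case its sum is $D_g F$. Subtracting, $F_N\diamond\tilde g = F_N\tilde g - D_g F_N$ converges in $L^1$ to $F\tilde g - D_g F$; taking this $L^1$ limit as the value of $F\diamond\tilde g$, as permitted by the remark following the definition of $\diamond$, yields the stated identity.

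The only non-routine input is the product formula for $I_n(f_n)\,I_1(g)$; this is a standard fact provable from the Hermite recursion $xH_n(x) = H_{n+1}(x) + n H_{n-1}(x)$ together with formula (2.6), first on tensors of the form $f^{\otimes n}$ by splitting $g$ into components parallel and orthogonal to $f$, and then extended to general $f_n\in H^{\hatotimes n}$ by polarization and density. The expected subtlety is merely bookkeeping: $F\tilde g$ need not lie in $L^2$ even when $F$ does, which is why one is forced to settle for $L^1$ convergence in the conclusion rather than the $L^2$ extension used elsewhere in Section~4; no further difficulty arises, and an alternative derivation via the $S$-transform (using $S(F\tilde g)(\xi)=S(D_gF)(\xi)+\langle g,\xi\rangle S(F)(\xi)$ and injectivity of $S$) would give the same identity.
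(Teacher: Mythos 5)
Your proof is correct, but it takes a genuinely different route from the paper's. The paper's proof is a two-line density argument: it verifies \eref{e.4.4} on the set $\vE$ of finite linear combinations of exponentials $e^{\tilde f}$, $f\in H$, where the identity follows from the Wick-exponential relation \eref{e.4.5} (differentiate $\vare(f)\diamond \vare(tg)=\vare(f+tg)$ at $t=0$ to get $\vare(f)\diamond\tilde g=\vare(f)\tilde g-\vare(f)\langle f,g\rangle_H=\vare(f)\tilde g-D_g\vare(f)$), and then extends by density of $\vE$ in $L^2$. You instead argue chaos by chaos, using the product formula $I_n(f_n)I_1(g)=I_{n+1}(f_n\hatotimes g)+n\,I_{n-1}(\langle f_n,g\rangle_H)$ and passing to the limit along the truncations $F_N$. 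Your route buys precision exactly where the paper is sketchiest: a bare ``$L^2$-density argument'' does not by itself control the term $D_gF$ (one needs approximation in the graph norm of $D_g$, not merely in $L^2$), whereas your truncations converge in graph norm automatically, since $D_gF_N$ is the orthogonal truncation of the chaos series of $D_gF$, so the hypothesis $D_gF\in L^2$ immediately forces $D_gF_N\to D_gF$ in $L^2$; it also makes transparent why the conclusion lives only in $L^1$ (Cauchy--Schwarz applied to $F_N\tilde g$). What the paper's route buys is brevity and reuse of machinery already developed; indeed your closing $S$-transform remark, $S(F\tilde g)(\xi)=S(D_gF)(\xi)+\langle g,\xi\rangle S(F)(\xi)$ with injectivity of $S$, is essentially the paper's exponential-vector argument in disguise, since both rest on \eref{e.4.5}. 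One small bookkeeping point: you appeal to ``the remark following the definition of $\diamond$'' to justify an $L^1$ limit, but Remark 4.2 actually commits the paper to $L^2$ limits; since the Proposition itself asserts existence only in $L^1$, this is an internal looseness of the paper rather than a flaw in your argument, and your explicit declaration that $F\diamond\tilde g$ denotes the $L^1$ limit along the canonical truncations is the appropriate reading.
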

\begin{proof}
Let
\[
\vE=\left\{ \sum_{finite} a_ie^{\tilde f_i}\,,\quad
\hbox{where}\quad a_i\in \RR\,, \ \ f_i\in H\right\}\,.
\]
Then $\vE$ is dense in $L^2$.  It is easy to see that \eref{e.4.4} is true for all
elements in $\vE$ by \eref{e.4.5}. A density argument shows the proposition.
\end{proof}

\begin{proposition}
If  $g\in  H$,   $F\in  L^2(B, \cB, \mu)$ and  if
$ D_g F$ exists and is in  $  L^2(B, \cB, \mu)$, then $F\diamond \tilde g$ exists
in $L^1(B, \cB, \mu)$ and
\begin{equation}
F\diamond \tilde g
=\de(Fg)\,.
\end{equation}
\end{proposition}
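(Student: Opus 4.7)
The identity to prove is $F \diamond \tilde g = \delta(Fg)$, and the previous Proposition \ref{t.4.8} has already done most of the work: it gives
\[
F \diamond \tilde g = F\tilde g - \langle DF, g\rangle_H,
\]
with the right-hand side in $L^1$. So my plan is to reduce the statement to the classical Gaussian integration-by-parts identity
\begin{equation}\label{e.plan.1}
\delta(Fg) = F \tilde g - \langle DF, g\rangle_H,
\end{equation}
and then invoke Proposition \ref{t.4.8} to conclude.

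To establish \eref{e.plan.1}, I would go back to the definition of $\delta$ as the adjoint of $D$. I need to check that for every test random variable $G \in \cS$,
\[
\EE\bigl(\langle Fg, DG\rangle_H\bigr) = \EE\bigl((F\tilde g - \langle DF, g\rangle_H) G\bigr).
\]
The left-hand side equals $\EE(F\, D_g G)$ since $\langle Fg, DG\rangle_H = F\, D_g G$. Using the Leibniz rule $D_g(FG) = F D_g G + G D_g F$, rewrite this as
\[
\EE(F\,D_g G) = \EE(D_g(FG)) - \EE(G\, D_g F).
\]
Then apply the scalar Gaussian integration by parts $\EE(D_g H) = \EE(H\,\tilde g)$ (which is exactly the identity $\delta(g) = \tilde g$ from the Example, applied to $H = FG$) to get $\EE(D_g(FG)) = \EE(FG\,\tilde g)$. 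Combining gives precisely \eref{e.plan.1}.

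The main technical obstacle is justifying the Leibniz rule and the integration-by-parts on $H = FG$: our hypothesis only gives $F, D_g F \in L^2$ in the direction $g$, not full membership in $D^{1,2}$. I would handle this exactly as in the proof of Proposition \ref{t.4.8}: approximate $F$ by elements $F_n \in \vE$ (for which the Leibniz rule and Cameron--Martin-based integration by parts are immediate), verify \eref{e.plan.1} for each $F_n$, and then pass to the limit using $F_n \to F$ and $D_g F_n \to D_g F$ in $L^2$, together with $\tilde g \in L^p$ for all $p$ and the fact that $G \in \cS$ has moments of all orders, to control $\EE(F_n G \tilde g)$ and $\EE(G\, D_g F_n)$ by Hölder. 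Having verified \eref{e.plan.1}, the stated equality $F \diamond \tilde g = \delta(Fg)$ follows immediately by chaining it with Proposition \ref{t.4.8}.
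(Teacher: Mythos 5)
Your proof is correct and takes essentially the same route as the paper: the paper's own proof starts from $\EE\left((F\diamond \tilde g)G\right)$ for $G\in\cS$, substitutes $F\diamond\tilde g = F\tilde g - \langle DF, g\rangle_H$ from Proposition \ref{t.4.8}, and applies exactly the same Leibniz rule and Gaussian integration by parts $\EE(D_g H)=\EE(H\tilde g)$ to reach $\EE\left(\langle DG, Fg\rangle_H\right)$, which is your chain of equalities read in the opposite direction. Your explicit density and limiting argument merely spells out what the paper leaves implicit, so there is no substantive difference in approach.
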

\begin{proof} Let $G\in \cS$.  Then
\begin{eqnarray*}
\EE\left( (F\diamond g)G\right)
&=& \EE\left(F G\tilde g-  G\langle DF, g\rangle _H  \right)\\
 &=& \EE\left(\langle D(F G)\,, g\rangle_H -   \langle GDF, g\rangle _H  \right)\\
&=& \EE\left(\langle    FDG\,,  g\rangle_H   \right)
 = \EE\left(\langle     DG\,,
Fg\rangle_H   \right)\,.
\end{eqnarray*}
Since $G$ is arbitrary, we show the proposition.
\end{proof}

The following proposition is used in \cite{duncan} and \cite{mams}.
\begin{proposition}
\begin{equation}
\EE\left[\left( F\diamond \tilde f\right)\left(G\diamond \tilde g\right)\right]
=\EE \left[ FG\langle f\,, g\rangle_H+D_gFD_fG\right]\,.
\end{equation}
\end{proposition}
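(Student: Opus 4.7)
The plan is to chain together the two tools developed just above: the identity $F\diamond\tilde g=\delta(Fg)$ from the previous proposition, and the Leibniz rule $D(F\diamond\tilde f)=DF\diamond\tilde f+F\diamond D\tilde f$ stated earlier for the Malliavin derivative of a Wick product. The duality between $D$ and $\delta$ will convert the inner product on the left-hand side into something we can expand and then reduce again via the same two identities.

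First I would write $G\diamond\tilde g=\delta(Gg)$ using the preceding proposition, and then apply the adjoint relation to move the divergence to the other side:
\[
\EE\bigl[(F\diamond\tilde f)(G\diamond\tilde g)\bigr]
=\EE\bigl[(F\diamond\tilde f)\,\delta(Gg)\bigr]
=\EE\bigl[\langle D(F\diamond\tilde f),\,Gg\rangle_H\bigr].
\]
Next I would apply the Leibniz formula for the derivative of a Wick product. Since $\tilde f$ is a first chaos element, $D\tilde f=f\in H$ is deterministic, and the Wick product of a random element of $H$ with a deterministic element is the ordinary scalar multiplication, giving $F\diamond D\tilde f=Ff$. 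Therefore
\[
D(F\diamond\tilde f)=DF\diamond\tilde f+Ff,
\]
and pairing with $Gg$ in $H$ yields
\[
\langle D(F\diamond\tilde f),Gg\rangle_H
=G\,(D_gF\diamond\tilde f)+FG\,\langle f,g\rangle_H,
\]
where I used that pairing in $H$ commutes with the Wick product in the other factor so that $\langle DF\diamond\tilde f,\,g\rangle_H=\langle DF,g\rangle_H\diamond\tilde f=D_gF\diamond\tilde f$.

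Taking expectations, the second term is already $\EE[FG\langle f,g\rangle_H]$, so the remaining task is to evaluate $\EE[G(D_gF\diamond\tilde f)]$. For this I would invoke the previous proposition once more, with $F$ replaced by $D_gF$, to get $D_gF\diamond\tilde f=\delta((D_gF)f)$, and then apply the duality again:
\[
\EE\bigl[G\,(D_gF\diamond\tilde f)\bigr]
=\EE\bigl[\langle DG,(D_gF)f\rangle_H\bigr]
=\EE\bigl[D_gF\cdot D_fG\bigr].
\]
Summing the two pieces produces the claimed identity. The only subtlety, and the main obstacle to a fully rigorous argument, is keeping track of the integrability hypotheses that make each invocation of the previous proposition and of the adjoint relation legitimate (all quantities in $L^2$, $F,G$ and their relevant first derivatives in $L^2$, and the Wick products in at least $L^1$); since the paper adopts the convention of stating these results under ``suitable conditions,'' I would state the proposition under the natural assumption that $F,G\in D^{1,p}$ for $p$ large enough, which is exactly what is needed for both applications of $F\diamond\tilde f=\delta(Fg)$ and for the Leibniz rule.
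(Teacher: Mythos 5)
Your proof is correct and follows essentially the same route as the paper's: both convert one Wick product to a divergence via $F\diamond\tilde h=\de(Fh)$, use the $D$--$\de$ duality, apply the Leibniz rule together with $F\diamond D\tilde f=Ff$, and then repeat the divergence-duality step on the remaining term. The only difference is cosmetic --- the paper starts from $F\diamond\tilde f=\de(Ff)$ and differentiates $G\diamond\tilde g$, while you do the symmetric counterpart --- and your closing remark on integrability hypotheses matches the paper's ``suitable conditions'' convention.
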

\begin{proof}
\begin{eqnarray*}
\EE\left[\left( F\diamond \tilde f\right)\left(G\diamond \tilde g\right)\right]
&=& \EE\left[\de\left( F  f\right)\left(G\diamond \tilde g\right)\right]
=\EE\left[  F  D_f\left(G\diamond \tilde g\right)\right]\\
&=&\EE\left[  F   \left(D_f G\diamond \tilde g+ G\diamond D_f\tilde g\right)\right]\\
&=&\EE\left[  F   \left(D_f G\diamond \tilde g+ G\langle f, g\rangle_H\right)\right]\\
&=& \EE\left[  F   \de \left(D_f G  g\right)+F  G\langle f, g\rangle_H\right]\\
&=& \EE \left[ FG\langle f\,, g\rangle_H+D_gFD_fG\right]\,.
\end{eqnarray*}
This is the proposition.
\end{proof}

Let $F=\exp\{ s\tilde f-s^2 \|f\|^2/2\}$  and $G=\exp\{t \tilde
g-t^2 \|g\|^2/2\}$, where $s$ and $t$ are two arbitrary constants
(we use $\|\cdot\|$ to denote $\|\cdot\|_H$). We have
\begin{eqnarray*}
F\diamond   G
&=&\exp\{ s\tilde f+t\tilde g-\|sf+tg\|^2/2\}\\
&=&\exp\{ s\tilde f  - s^2\| f \|^2/2\} \exp\{  t\tilde g-t^2\|
g\|^2/2\}
\exp\{ - st\langle  f, g\rangle  \}  \\
&=&\sum_{i, j, p=0}^\infty s^i t^j (st)^p
 {(-1)^p I_i
(f^{\hat\otimes i})I_{j}(g^{\hat\otimes j})\langle  f, g\rangle
^p \over i!j!p!}\\
&=&\sum_{m, n=0}^\infty s^m t^n \sum_{p\le m\wedge n}
 {(-1)^p I_{m-p}
(f^{\hat\otimes m-p})I_{n-p}(g^{\hat\otimes n-p})\langle  f,
g\rangle ^p \over p!(n-p)!(m-p)!}.
\end{eqnarray*}
On other hand,  we have
$$
F\diamond   G=\sum_{n, m=1}^\infty s^mt^n I_m(f^{\hat\otimes m
})\diamond I_{n}(g^{\hat\otimes n})/m!n!\ .
$$
Comparing the coefficients of  $s^mt^n$,  we can write the above
formula as
$$
I_m(f^{\hat\otimes m })\diamond I_{n}(g^{\hat\otimes n})=\sum_{p\le
m\wedge n} {(-1)^p   n!m! \langle  f, g\rangle ^p  \over
p!(n-p)!(m-p)!}I_{m-p} (f^{\hat\otimes m-p})I_{n-p}(g^{\hat\otimes
n-p}).
$$
By using the Malliavin  derivative,  the above formula can be written as
$$
I_m(f^{\hat\otimes m })\diamond I_{n}(g^{\hat\otimes n})=\sum_{p\le
m\wedge n} {(-1)^p \over p!} \langle  D  ^p I_m (f^{\hat\otimes m
}),
  D  ^pI_{n }(g^{\hat\otimes n })\rangle_ {H^{\hat\otimes p}},
$$
where  $  D   ^pF$  is identified as a mapping from $\Om$  to
$H^{\hat\otimes p}$. By the polarization technique,  we have if $f_m$ and
$g_n$ are continuous symmetric functions of $m$ and $n$-variables,
then
\begin{equation}\label{e.4.6}
I_m(f_m )\diamond I_{n}(g_n)=\sum_{p\le m\wedge n} {(-1)^p \over p!}
\langle  D  ^p I_{m } (f_m),   D  ^pI_n (g_n)\rangle
_{H^{\hat\otimes p} }.
\end{equation}
In the same way we can obtain that
\begin{equation}\label{e.4.7}
I_m(f_m ) I_{n}(g_n)=\sum_{p\le m\wedge n} {1 \over p!} \langle  D
^p I_{m } (f_m)\,, \diamond    D  ^pI_n (g_n)\rangle
_{H^{\hat\otimes p} }\,,
\end{equation}
where   we consider $D  ^p I_{m } (f_m)$ and $ D  ^pI_n (g_n)$ as
two random variables with values in $H^{\hat\otimes p}$ and $\langle
D  ^p I_{m } (f_m)\,, \diamond    D  ^pI_n (g_n)\rangle
_{H^{\hat\otimes p} }$  is the {\it Wick scalar product}. More
precisely,
\begin{eqnarray*}
&&\langle  D  ^p I_{m } (f_m)\,, \diamond    D  ^pI_n (g_n)\rangle
_{H^{\hat\otimes p} }\\
&=&\sum_{k_1, \cdots, k_p=1}^\infty \langle  D  ^p I_{m } (f_m)\,,
e_{k_1}\hat\otimes \cdots \hat\otimes e_{k_p} \rangle
_{H^{\hat\otimes p} } \diamond \langle   D  ^pI_n (g_n)\,,
e_{k_1}\hat\otimes \cdots \hat\otimes e_{k_p} \rangle
_{H^{\hat\otimes p} }\\
&=&  \sum_{k_1, \cdots, k_p=1}^\infty m(m-1)\cdots (m-p+1) n(n-1)\cdots (n-p+1)
I_{m+n-2p}(h)\
\end{eqnarray*}
with
\begin{eqnarray*}h
&=&  \sum_{k_1, \cdots, k_p=1}^\infty \langle   f_n \,,
e_{k_1}\hat\otimes \cdots \hat\otimes e_{k_p} \rangle \hat\otimes
\langle g_n \,,  e_{k_1}\hat\otimes \cdots \hat\otimes e_{k_p}
\rangle\,,
\end{eqnarray*}
where as before,   $\{e_1, e_2\,, \cdots\}$ is an orthonormal basis of $H$.

We cab define $\langle  D  ^p G\,, \diamond    D  ^pG\rangle
_{H^{\hat\otimes p} } $ in a similar way.

The above two formulas are for single chaos.
We can use the linearity and limiting argument to show that
\begin{theorem} \label{t.5.3} If all  the  Malliavin derivatives of $F$ and $G$ exists
and satisfy for $H=F$ and $H=G$
\begin{equation}
\sum_{p=1}^\infty  {1 \over p!} \|  D  ^p H\|_{H^{\hat\otimes p}}^2
<\infty\,,  \label{e.5.9}
\end{equation}
then $F\diamond G$ exists as an element in $L^1(\Om, \cF, P)$ and
\begin{equation}
F\diamond   G=\sum_{p=0}^\infty {(-1)^p \over p!} \langle  D  ^p F,
  D   ^pG\rangle  _{H^{\hat\otimes p}}. \label{e.4.10}
\end{equation}
\end{theorem}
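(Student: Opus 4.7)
The plan is to reduce to formula (\ref{e.4.6}) via chaos truncations and then use hypothesis (\ref{e.5.9}) to control the resulting double sum. First, I would write the chaos expansions $F=\sum_{m=0}^\infty I_m(f_m)$ and $G=\sum_{n=0}^\infty I_n(g_n)$, and let $F_N$, $G_N$ denote their truncations at chaos level $N$. For these finite sums the Wick product is unambiguously defined, and combining (\ref{e.4.6}) with bilinearity yields
\begin{eqnarray*}
F_N\diamond G_N
&=&\sum_{m,n=0}^N\sum_{p=0}^{m\wedge n}\frac{(-1)^p}{p!}\langle D^p I_m(f_m),D^p I_n(g_n)\rangle_{H^{\hatotimes p}}\\
&=&\sum_{p=0}^N\frac{(-1)^p}{p!}\langle D^p F_N,D^p G_N\rangle_{H^{\hatotimes p}},
\end{eqnarray*}
after exchanging the order of summation and using $D^p I_m(f_m)=0$ for $m<p$ together with the linearity of $D^p$.

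Next, I would observe that for distinct $m\ge p$ the terms $D^p I_m(f_m)$ belong to different Wiener chaoses and are therefore orthogonal in $L^2(B;H^{\hatotimes p})$. Consequently $\|D^p F_N\|_{L^2}$ is nondecreasing in $N$ with limit $\|D^p F\|_{L^2}$, and $D^p F_N\to D^p F$ in $L^2(B;H^{\hatotimes p})$; the same holds for $G$. Two successive applications of Cauchy--Schwarz---first pointwise inside the expectation, then in the index $p$---produce
\[
\sum_{p=0}^\infty\frac{1}{p!}\EE\,|\langle D^p F,D^p G\rangle_{H^{\hatotimes p}}|\le\left(\sum_{p=0}^\infty\frac{1}{p!}\|D^p F\|_{L^2}^2\right)^{1/2}\left(\sum_{p=0}^\infty\frac{1}{p!}\|D^p G\|_{L^2}^2\right)^{1/2},
\]
which is finite by (\ref{e.5.9}). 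Thus the series on the right of (\ref{e.4.10}) converges absolutely in $L^1$.

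Finally, I would pass to the limit $N\to\infty$. For each fixed $p$, the $L^2$-convergence established above gives $\langle D^p F_N,D^p G_N\rangle\to\langle D^p F,D^p G\rangle$ in $L^1$, while the partial sums in $p$ are uniformly (in $N$) dominated by the summable majorant $\frac{1}{p!}\|D^p F\|_{L^2}\|D^p G\|_{L^2}$. Dominated convergence on $\ell^1(\NN)$ then lets me exchange the $p$-sum with the $N$-limit, so that $F_N\diamond G_N$ converges in $L^1$ to the right-hand side of (\ref{e.4.10}); this $L^1$ limit is, by construction, $F\diamond G$ in the sense of the present theorem.

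The step I expect to be the main obstacle is justifying the simultaneous passage to the limit in $N$ and in $p$: this is precisely where the hypothesis (\ref{e.5.9}) enters, via the monotonicity $\|D^p F_N\|_{L^2}\nearrow\|D^p F\|_{L^2}$ furnished by the orthogonality of chaos, which supplies the uniform $\ell^1$-majorant required for dominated convergence. Without (\ref{e.5.9}) this majorant would cease to be summable and the identity could not be established in $L^1$.
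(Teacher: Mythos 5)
Your proof is correct and follows essentially the same route as the paper: the paper derives the single-chaos formula \eref{e.4.6} via exponential vectors and then asserts the theorem ``by linearity and limiting argument,'' which is exactly the truncation-and-limit scheme you execute. Your chaos-orthogonality monotonicity $\|D^pF_N\|_{L^2}\nearrow\|D^pF\|_{L^2}$ and the double Cauchy--Schwarz majorant supply precisely the details the paper leaves implicit in that phrase.
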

We can also try to find conditions for the following identity to hold.
\[
F  G=\sum_{p=0}^\infty {1 \over p!} \langle  D  ^p F\,, \ \diamond
  D  ^pG\rangle _{H^{\hat\otimes p} }\,.
  \]

\begin{example} If $G=\tilde g$ for some $g\in H$,  then from \eref{e.4.10}
\[
F\diamond G=F\tilde g-\langle DF, g\rangle\,.
\]
\end{example}

\begin{example} If $G=I_2(g_2)$ for some $g_2\in H^{\hatotimes 2}$,  then from \eref{e.4.10}
\[
F\diamond G=FI_2(g_2)-2\langle DF, I_1(g_2)\rangle+\langle D^2F, g_2\rangle\,.
\]
\end{example}

The following proposition states if $F$ and $G$ are independent and if they are in single chaos form,
then their Wick product and the usual product are the same.

\begin{proposition} If $F=I_n(f_n)$ and $G=I_m(g_m)$ are independent, where
$f_n\in H^{\hatotimes n}$ and $g_m\in H^{\hatotimes m}$, then
\begin{equation}
F\diamond G=FG\,.
\end{equation}
\end{proposition}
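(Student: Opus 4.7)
The plan is to combine the Üstünel--Zakai independence criterion with the Malliavin--Wick expansion \eref{e.4.7} of the ordinary product. The $p=0$ term on the right-hand side of \eref{e.4.7} is already $F\diamond G$, so the proposition reduces to showing that the $p\ge 1$ terms vanish almost surely under the independence hypothesis.

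First I would invoke the Üstünel--Zakai theorem cited earlier in the excerpt: since $F=I_n(f_n)$ and $G=I_m(g_m)$ are independent, the contraction $\langle f_n,g_m\rangle_H$ vanishes in $H^{\hat\otimes (n+m-2)}$. Iterating this single-contraction vanishing against additional basis vectors $e_{k_2},\ldots,e_{k_p}$ (tensor contraction preserves zero) yields the higher symmetric contraction identities
\[
\sum_{k_1,\ldots,k_p} \langle f_n,e_{k_1}\otimes\cdots\otimes e_{k_p}\rangle \,\hat\otimes\, \langle g_m,e_{k_1}\otimes\cdots\otimes e_{k_p}\rangle = 0
\]
in $H^{\hat\otimes(n+m-2p)}$ for every $p=1,\ldots,n\wedge m$.

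Next I would unpack \eref{e.4.7}. Using $\langle D^p I_n(f_n), e_{k_1}\otimes\cdots\otimes e_{k_p}\rangle = n(n-1)\cdots(n-p+1)\,I_{n-p}(\langle f_n, e_{k_1}\otimes\cdots\otimes e_{k_p}\rangle)$, the analogous identity for $G$, and the single-chaos Wick identity $I_{n-p}(a)\diamond I_{m-p}(b)=I_{n+m-2p}(a\hat\otimes b)$, each summand of the $p$-th term of \eref{e.4.7} becomes a scalar multiple of $I_{n+m-2p}(\langle f_n,\bar e_k\rangle \hat\otimes \langle g_m,\bar e_k\rangle)$ with $\bar e_k = e_{k_1}\otimes\cdots\otimes e_{k_p}$. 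Summing over $\bar k$ collapses these kernels to exactly the symmetric contraction displayed above, which is $0$ by the first step. Consequently the $p\ge 1$ contributions all drop out of \eref{e.4.7}, leaving $FG = F \diamond G$.

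The main obstacle I foresee is the propagation step --- deducing from the single vanishing supplied by Üstünel--Zakai that every higher symmetric contraction is also zero. If the criterion is used in its non-symmetric form (as in the original proof) the iteration is immediate; otherwise I would pass through the rank-one kernels $f_n=f^{\hat\otimes n}$, $g_m=g^{\hat\otimes m}$, where independence forces $\langle f,g\rangle_H = 0$ and hence all powers $\langle f,g\rangle_H^p$ trivially vanish (so the explicit formula derived in the excerpt just before Proposition~\ref{t.4.2} collapses to $FG$), and then extend to arbitrary symmetric kernels by polarization and $L^2$-density.
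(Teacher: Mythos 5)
Your main argument is correct and is essentially the paper's own proof: the paper disposes of the proposition in one line, citing Theorem 2.7 (\"Ust\"unel--Zakai) together with the expansion \eref{e.4.6}, while you use the mirror identity \eref{e.4.7}; in either version the $p\ge 1$ terms reduce to constant multiples of $I_{n+m-2p}$ applied to the symmetrized $p$-th contractions, which vanish once the first contraction does, exactly as you argue. You are also right that the criterion must be taken in its non-symmetric form $f_n\otimes_1 g_m=0$ (as in \"Ust\"unel and Zakai's original paper) for the iteration to higher contractions to be immediate; the symmetrized contraction displayed in the paper's Theorem 2.7 is in fact too weak as stated, since for example $f=e_1\otimes e_1-e_2\otimes e_2$ and $g=2\,e_1\hatotimes e_2$ have vanishing symmetrized contraction while $I_2(f)=\tilde e_1^2-\tilde e_2^2$ and $I_2(g)=2\tilde e_1\tilde e_2$ are dependent. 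One caution about your contingency plan: the reduction to rank-one kernels followed by ``polarization and $L^2$-density'' is unsound for this statement, because independence is a hypothesis on the given pair $(f_n,g_m)$, not a bilinear identity --- writing $f_n$ as a limit of combinations of powers $f^{\hatotimes n}$ gives you no independence for the individual pieces, so there is nothing to polarize; fortunately your primary route never needs this fallback.
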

\begin{proof} It is a direct consequence of \eref{e.4.6} and Theorem 2.7.
\end{proof}

\medskip
From this proposition it is natural to conjecture   that if $F$ and $G$ are independent,
then $F\diamond G=FG$.

Denote by  $\tau_\xi F$ the translation operator:  $\tau_\xi
F(\om)=F(\om+\xi)$\,, \ $\om \in B$.   In the framework of white
noise analysis, the following identities are from \cite{yan95}.  It
holds in our framework here under suitable conditions which are not
made  precise here.
\begin{eqnarray}
\tau_\xi  F&=&\left[\vare(\xi) F\right]\diamond \vare(\xi)\\
\tau_\xi (F\diamond G) &=& (\tau_\xi  F)\diamond  (\tau_\xi
  G)\\
D(F\diamond G)&=& (DF)\diamond  G+ F\diamond (D G) \,.
\end{eqnarray}

\setcounter{equation}{0}
\section{Wick renormalization}

It is natural to define the Wick power
\[
F^{\diamond k}= \overbrace{F\diamond F\diamond \cdots \diamond F}^k\,.
\]
If $l\in H$,  then
\begin{eqnarray*}
\tilde l^{\diamond n}
&=&I_{n}\left(l^{\hat\otimes n}\right)=\|l\|^n H_n\left(\frac{\tilde l}{\|l\|}\right)\,;\\
\exp^{\diamond} \left(\tilde l\right)
&=& \exp\left(\tilde l-\frac{\|l\|^2}{2}\right)\,.
\end{eqnarray*}

If $f:\RR^d\rightarrow \RR$ is a real valued entire function, then
\begin{equation}
f(x)=\sum_{n=0}^\infty \sum_{n_1+\cdots+n_d=n} a_{n_1, \cdots, n_d} x^{n_1}\cdots x_d^{n_d}\,.
\end{equation}
\begin{theorem} \label{t.5.1}  Let
$X_1=\tilde l_1\,, \cdots\,, X_d=\tilde l_d$ be  independent Gaussian
random variables.  If $f:\RR^d\rightarrow \RR$ is real entire function given above such that
\begin{equation}
\sum_{n=0}^\infty \sum_{n_1+\cdots+n_d=n} n_1!\cdots n_d! a_{n_1, \cdots, n_d} \|X_1\|_2^{2n_1}
\dots \|X_d\|_2^{2n_d}<\infty\,, \label{e.5.2}
\end{equation}
then
\begin{equation}
f^{\diamond }(X_1, \cdots, X_d)=\sum_{n=0}^\infty \sum_{n_1+\cdots+n_d=n}
a_{n_1, \cdots, n_d} X_1^{\diamond
n_1} \diamond \cdots \diamond X_d^{\diamond n_d}\label{e.5.3}
\end{equation}
is well-defined as an element in $L^2$.  Moreover,
\begin{equation}
\EE\left[f^{\diamond }(X_1, \cdots, X_d)\right]^2 =\sum_{n=0}^\infty \sum_{n_1+\cdots+n_d=n} n_1!\cdots n_d! a_{n_1, \cdots, n_d} ^2\|X_1\|_2^2
\dots \|X_d\|_2^2\,.
\end{equation}
\end{theorem}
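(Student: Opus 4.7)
The plan is to reduce the statement to computations inside a single Wiener chaos by exploiting independence and the orthogonality it forces. Since $X_1,\dots,X_d$ are independent Gaussian random variables with $X_i=\tilde l_i$, Theorem 2.7 implies that $\langle l_i, l_j\rangle_H = 0$ for $i\neq j$; so without loss of generality I may assume $l_1,\dots,l_d\in H$ are mutually orthogonal. This is the crucial structural input that allows all subsequent computations to decouple cleanly across the $d$ coordinates.

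Next I would identify each Wick monomial as an element of a single chaos. By the Wick power formula recalled at the start of Section 5, $X_i^{\diamond n_i} = I_{n_i}(l_i^{\otimes n_i})$, and then iterating the definition of the Wick product,
\begin{equation*}
X_1^{\diamond n_1}\diamond\cdots\diamond X_d^{\diamond n_d}
\;=\; I_n\!\left(l_1^{\otimes n_1}\hatotimes\cdots\hatotimes l_d^{\otimes n_d}\right),
\qquad n:=n_1+\cdots+n_d.
\end{equation*}
Applying the isometry \eref{e.2.7} together with orthogonality of the $l_i$'s, one computes
\begin{equation*}
\EE\bigl|X_1^{\diamond n_1}\diamond\cdots\diamond X_d^{\diamond n_d}\bigr|^2
\;=\; n!\,\bigl\|l_1^{\otimes n_1}\hatotimes\cdots\hatotimes l_d^{\otimes n_d}\bigr\|^2_{H^{\hatotimes n}}
\;=\; n_1!\cdots n_d!\,\|l_1\|^{2n_1}\!\cdots\|l_d\|^{2n_d}
\end{equation*}
(the combinatorial factor $n_1!\cdots n_d!/n!$ from the symmetrization cancels the $n!$). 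Since $\|X_i\|_2^2=\|l_i\|^2$, this matches the right-hand side of the claimed $L^2$ identity term by term.

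The next step is the orthogonality of distinct Wick monomials in $L^2$. Any two terms with multi-indices $(n_1,\dots,n_d)\neq(m_1,\dots,m_d)$ having $n\neq m$ lie in different chaoses and are automatically orthogonal; and if $n=m$ but the multi-indices differ, the corresponding symmetric tensors $l_1^{\otimes n_i}\hatotimes\cdots$ are built from disjoint orthogonal packets of basis vectors $l_i/\|l_i\|$ and are hence orthogonal in $H^{\hatotimes n}$, which by \eref{e.2.7} translates to $L^2$-orthogonality. Therefore, applying Pythagoras to any finite partial sum of the series \eref{e.5.3},
\begin{equation*}
\Bigl\|\sum_{(n_1,\dots,n_d)\in F} a_{n_1,\dots,n_d}\,X_1^{\diamond n_1}\diamond\cdots\diamond X_d^{\diamond n_d}\Bigr\|_2^2
= \sum_{(n_1,\dots,n_d)\in F} a_{n_1,\dots,n_d}^2\,n_1!\cdots n_d!\,\|X_1\|_2^{2n_1}\!\cdots\|X_d\|_2^{2n_d},
\end{equation*}
and condition \eref{e.5.2} (interpreted with $a_{n_1,\dots,n_d}^2$, matching the displayed norm identity) makes the partial sums Cauchy in $L^2$, yielding both the existence of $f^\diamond(X_1,\dots,X_d)\in L^2$ and the announced norm formula.

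The only real technical step is the combinatorial identity $\|l_1^{\otimes n_1}\hatotimes\cdots\hatotimes l_d^{\otimes n_d}\|^2_{H^{\hatotimes n}}=\frac{n_1!\cdots n_d!}{n!}\|l_1\|^{2n_1}\!\cdots\|l_d\|^{2n_d}$; I would verify it by expanding the symmetrization as an average over $\mathfrak S_n$ and using that, because of orthogonality of the $l_i$'s, only permutations preserving the block structure contribute, yielding exactly $n_1!\cdots n_d!/n!$ nonzero terms each equal to $\|l_1\|^{2n_1}\cdots\|l_d\|^{2n_d}$. Everything else is either a direct application of the definition of $\diamond$, the isometry \eref{e.2.7}, or an $L^2$-limit argument, so no further obstacle arises.
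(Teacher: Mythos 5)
Your proof is correct and follows essentially the same route as the paper's own (two-line) proof: compute $\EE\bigl[X_1^{\diamond n_1}\diamond\cdots\diamond X_d^{\diamond n_d}\bigr]^2=n_1!\cdots n_d!\,\|X_1\|_2^{2n_1}\cdots\|X_d\|_2^{2n_d}$ and invoke orthogonality of the distinct terms of \eref{e.5.3}, with you merely supplying the details the paper leaves implicit (the reduction to orthogonal $l_i$ via Theorem 2.7, the symmetrization count $n_1!\cdots n_d!/n!$, and the $L^2$-Cauchy argument). Your reading of \eref{e.5.2} with $a_{n_1,\cdots,n_d}^2$, consistent with the displayed norm identity, is also the appropriate correction of the paper's evident typo.
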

\begin{remark}
\begin{description}
\item{(i)}\quad It is obvious that $X_1^{\diamond
n_1} \diamond \cdots \diamond X_d^{\diamond n_d}$ in \eref{e.5.3} can be replaced by
$X_1^{\diamond
n_1}   \cdots   X_d^{\diamond n_d}$.
\item{(ii)}\quad We also denote
\[
:f (X_1, \cdots, X_d):=f^{\diamond }(X_1, \cdots, X_d)
\]
which is called the  {\it Wick ordering} or  {\it Wick
renormalization}  of $f (X_1, \cdots, X_d)$. A limiting argument can
be used  to discuss the case $d=\infty$.
\item{(iii)}\quad Wick renormalization is studied in \cite{yan91} in
the framework of white noise analysis and with the use of scaling
operator.
\end{description}
\end{remark}
\begin{proof} It is easy to see that
\[
\EE\left[ X_1^{\diamond
n_1} \diamond \cdots \diamond X_d^{\diamond n_d}\right]^2=n_1! \cdots,  n_d! \|X_1\|_2^{2n_1}
\dots \|X_d\|_2^{2n_d}\,.
\]
The theorem is  proved by the orthogonality of each term in \eref{e.5.3}.
\end{proof}

\begin{theorem}  Let $X=(X_1\,, \cdots\,, X_d)$ and $f$ be as in the previous theorem.
Let $(Y_1\,, \cdots\,, Y_d)$ be independent copy of   $X_1\,, \cdots\,, X_d$ ($Y_1\,, \cdots\,, Y_d$
and $X_1\,, \cdots\,, X_d$ are independent and have the same joint probability distribution).
Then
\begin{equation}
f^{\diamond }(X_1, \cdots, X_d)= \EE\left[ f(X_1+iY_1\,,
\cdots\,, X_d+iY_d)|X_1\,, \cdots\,, X_d\right]
\label{e.5.5}
\end{equation}
\end{theorem}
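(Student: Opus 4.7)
The plan is to establish \eref{e.5.5} first on monomials and then to pass to the full entire series by a linearity/limiting argument based on the summability hypothesis \eref{e.5.2}.

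For a single index, set $\si_j^2 = \|l_j\|_H^2$. Since $Y_j$ is a Gaussian with variance $\si_j^2$ independent of $X_j$, the conditional moment generating function is
\[
\EE\bigl[e^{t(X_j + iY_j)} \,\big|\, X_j\bigr] = e^{tX_j}\,\EE[e^{itY_j}] = e^{tX_j - t^2 \si_j^2/2},
\]
which is precisely the generating function $\sum_{n\ge 0}\frac{t^n}{n!}\,\si_j^n H_n(X_j/\si_j) = \sum_{n\ge 0}\frac{t^n}{n!}\,X_j^{\diamond n}$ of the Wick powers. Matching coefficients of $t^n$ gives
\[
\EE\bigl[(X_j + iY_j)^n \,\big|\, X_j\bigr] = X_j^{\diamond n}, \qquad n\ge 0.
\]
Because the $Y_j$'s are mutually independent and independent of $(X_1,\ldots,X_d)$, the conditional expectation of a product of independent factors factors over $j$:
\[
\EE\Bigl[\prod_{j=1}^d (X_j + iY_j)^{n_j}\,\Big|\,X_1,\ldots,X_d\Bigr] = \prod_{j=1}^d X_j^{\diamond n_j}.
\]

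Next I would identify this ordinary product with the Wick product $X_1^{\diamond n_1}\diamond\cdots\diamond X_d^{\diamond n_d}$. Since the $X_j$ are independent, Theorem~2.7 lets us take $l_1,\ldots,l_d$ pairwise orthogonal in $H$, so any pair of single-chaos variables $I_{n_j}(l_j^{\hat\otimes n_j})$ and $I_{n_k}(l_k^{\hat\otimes n_k})$ is independent. Iterating the proposition that $F\diamond G = FG$ for independent single-chaos random variables (at each stage the running Wick product $I_{n_1+\cdots+n_k}(l_1^{\hat\otimes n_1}\hat\otimes\cdots\hat\otimes l_k^{\hat\otimes n_k})$ remains in a single chaos whose kernel is orthogonal to $l_{k+1}$) yields
\[
X_1^{\diamond n_1}\diamond\cdots\diamond X_d^{\diamond n_d} = X_1^{\diamond n_1}\cdots X_d^{\diamond n_d},
\]
in accordance with Remark~(i) of Theorem~\ref{t.5.1}. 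This proves \eref{e.5.5} for every monomial $\prod_j x_j^{n_j}$, and hence, by linearity of conditional expectation, for every polynomial $f$.

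Finally I would extend to the full entire series. Let $f_N$ denote the partial sum of $f$ over multi-indices of total degree at most $N$; by Theorem~\ref{t.5.1} the Wick series $f_N^{\diamond}(X_1,\ldots,X_d)$ converges in $L^2$ to $f^{\diamond}(X_1,\ldots,X_d)$. To transfer this convergence to the left-hand side, I would use the moment identity $\EE|X_j+iY_j|^{2n_j} = 2^{n_j} n_j!\,\si_j^{2n_j}$ (which follows from $|X_j+iY_j|^2 = X_j^2 + Y_j^2$ and the fact that $X_j^2+Y_j^2$ has an exponential distribution with mean $2\si_j^2$), combined with \eref{e.5.2}, to control the tail of $f(X+iY) - f_N(X+iY)$ in $L^1$. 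Since conditional expectation is a contraction, $\EE[f_N(X+iY)\mid X]\to\EE[f(X+iY)\mid X]$ in the corresponding norm, and the identity passes to the limit. The main obstacle is precisely this step: justifying the termwise interchange of conditional expectation with the complex-translated series $f(X+iY)$; once this is controlled by \eref{e.5.2}, the monomial case delivers \eref{e.5.5}.
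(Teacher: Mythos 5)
Your proof follows essentially the same route as the paper's: in both, the key step is the conditional generating-function identity $\EE\left[e^{t(X_j+iY_j)}\mid X_j\right]=e^{tX_j-t^2\|X_j\|_2^2/2}=\sum_{n\ge 0}\frac{t^n}{n!}X_j^{\diamond n}$, matched coefficientwise in $t^n$ to obtain $\EE\left[(X_j+iY_j)^n\mid X_j\right]=X_j^{\diamond n}$, followed by factorization of the conditional expectation over the independent coordinates. Your extra care at the two points the paper leaves implicit — identifying $X_1^{\diamond n_1}\diamond\cdots\diamond X_d^{\diamond n_d}$ with the ordinary product via the independence criterion, and controlling the termwise interchange of conditional expectation with the series through the moment bound $\EE|X_j+iY_j|^{2n}=2^n n!\,\|X_j\|_2^{2n}$ together with \eref{e.5.2} — is sound and simply makes the same argument rigorous.
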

\begin{proof} We have (denote $\sum=\sum_{n=0}^\infty \sum_{n_1+\cdots+n_d=n}$)
\begin{eqnarray*}
\EE\left[ f(X+iY)|X\right]
&=&\sum
a_{n_1, \cdots, n_d} \EE \left[ (X_1+iY_1) ^{
n_1} \  \cdots   (X_d+iY_d)^{  n_d}\big|X_1\,, \cdots\,, X_d\right]\\
&=& \sum
a_{n_1, \cdots, n_d} \EE \left[ (X_1+iY_1) ^{
n_1}  \big|X_1 \right] \cdots \EE \left[ (X_d+iY_d) ^{
n_d}  \big|X_d \right] \,.
\end{eqnarray*}
Thus it suffices to show that for every $n\ge 1$,
\begin{equation}
\EE \left[ (X_1+iY_1) ^{
n }  \big|X_1 \right]=X_1^{\diamond  n }\,.
\label{e.5.6}
\end{equation}
In fact, we have for all $t\in \RR$,
\begin{eqnarray*}
\EE \left[ e^{t(X_1+iY_1)} \big|X_1 \right]
&=&  e^{t X_1-\frac12 \|Y_1\|_2^2} =e^{t X_1-\frac12 \|X_1\|_2^2} \\
&=&\sum_{n=0}^\infty \frac{t^n}{n!} X_1^{\diamond n}\,.
\end{eqnarray*}
Expanding  the left hand side in term of $t^n$ and comparing  the coefficients of $t^n$,   we
prove \eref{e.5.6} and hence  the theorem.
\end{proof}

Let us first use Theorem \ref{t.5.3} to compute
$:e^{\la X^2}:$,  where $X$ is a standard normal random variable.

\begin{proposition} If $|\la|<1$,  then the Wick renormalization of
$e^{\frac12 \la X^2}$ exists in the sense of Theorem \ref{t.5.1} and
\begin{equation}
:e^{\frac12 \la X^2}:
 =  \frac{1}{\sqrt{\la+1}}e^{\frac{\la}{2(\la+1)} X^2}\,.
\end{equation}
\end{proposition}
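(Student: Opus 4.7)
The plan is to invoke the integral representation of Wick renormalization proved in the preceding theorem. That is, writing $f(x) = e^{\lambda x^2/2}$ and letting $Y$ be an independent copy of $X$, I expect
\[
:e^{\frac12 \lambda X^2}: \;=\; \EE\bigl[\,e^{\frac12 \lambda (X+iY)^2}\,\big|\,X\,\bigr].
\]
Before applying this, I need to check the hypothesis of Theorem \ref{t.5.1}, namely that the Taylor coefficients $a_{2n} = \lambda^n/(2^n n!)$ (with $a_{2n+1}=0$) give a convergent series in (\ref{e.5.2}). With $\|X\|_2^2 = 1$, the relevant series becomes $\sum (2n)!\,a_{2n}^2 = \sum \binom{2n}{n}(\lambda^2/4)^n$, which converges precisely when $\lambda^2<1$. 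This is exactly the assumption $|\lambda|<1$, so the Wick renormalization is a well-defined element of $L^2$.

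Next I carry out the Gaussian computation. Expanding $(X+iY)^2 = X^2 - Y^2 + 2iXY$, I factor out $e^{\lambda X^2/2}$ (which is measurable with respect to $X$) and reduce the problem to
\[
\EE\bigl[\,e^{i\lambda XY - \frac12 \lambda Y^2}\,\big|\,X\,\bigr].
\]
Since $Y\sim N(0,1)$ is independent of $X$, completing the square in the exponent $-\tfrac{1}{2}(1+\lambda)y^2 + i\lambda X y$ inside the density of $Y$ (valid because $1+\lambda>0$) produces
\[
\frac{1}{\sqrt{1+\lambda}}\,\exp\!\left(\frac{(i\lambda X)^2}{2(1+\lambda)}\right) = \frac{1}{\sqrt{1+\lambda}}\,\exp\!\left(-\frac{\lambda^2 X^2}{2(1+\lambda)}\right).
\]

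Multiplying back by $e^{\lambda X^2/2}$ and combining exponents using
\[
\frac{\lambda}{2} - \frac{\lambda^2}{2(1+\lambda)} \;=\; \frac{\lambda(1+\lambda) - \lambda^2}{2(1+\lambda)} \;=\; \frac{\lambda}{2(1+\lambda)}
\]
yields the claimed formula. The main subtlety, and the step I would be most careful with, is justifying the exchange of conditional expectation with the Taylor series in the application of the preceding theorem: I need absolute integrability of $|e^{\lambda(X+iY)^2/2}| = e^{\lambda(X^2-Y^2)/2}$ under the joint law of $(X,Y)$, which requires both $\mathbb{E}[e^{\lambda X^2/2}]<\infty$ and $\mathbb{E}[e^{-\lambda Y^2/2}]<\infty$; the two-sided bound $|\lambda|<1$ is precisely what guarantees both. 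The rest is bookkeeping.
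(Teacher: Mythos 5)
Your proposal is correct and follows essentially the same route as the paper: check that the coefficient condition \eref{e.5.2} for $f(x)=e^{\la x^2/2}$ reduces exactly to $|\la|<1$, then apply the representation $:f(X):=\EE\left[f(X+iY)\,|\,X\right]$ with an independent copy $Y$ and evaluate the resulting Gaussian integral by completing the square, combining exponents to get $\frac{\la}{2(\la+1)}$. Your closing remark on absolute integrability of $e^{\la(X^2-Y^2)/2}$ is extra care that the paper's proof silently omits, not a different argument.
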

\begin{proof} First we have
\[
e^{\frac12 \la x^2}=\sum_{n=0}^\infty a_nx^n\,,
\]
where
\[
a_n=\cases{ 0& if $n=2k+1$\cr
\frac{\la^k}{2^k k!}&if $n=2k $\,. \cr
}\]
Thus \eref{e.5.2} is equivalent to $|\la|< 1$.

Let $Y$ be a standard normal independent of $X$.  From Theorem \ref{t.5.3}, we have
\begin{eqnarray*}
:e^{\frac12 \la X^2}:
&=&\EE\left[ e^{\frac12 \la \left(X+iY\right)^2}|X\right]\\
&=& \EE\left[ e^{\frac12 \la X^2 +i\la XY-\frac12\la Y^2 }|X\right]\\
&=& e^{\frac12 \la X^2 } \frac1{\sqrt{2\pi}}\int_{-\infty}^\infty
e^{ i\la Xy-\frac{\la+1}2  y^2 } dy\\
&=& \frac{1}{\sqrt{\la+1}}e^{\frac{\la}{2(\la+1)} X^2}\,.
\end{eqnarray*}
\end{proof}

\begin{example}  Let $f\in H^{\hatotimes 2}$ and  consider $:\exp\left(\frac12 I_2(f)\right):$.
\end{example}
It is well-known that there is an orthonormal basis $\{e_1, e_2, \cdots\}$ of $H$ and $\al_n\,, n=1, 2, \cdots$
such that $\sum_{n=1}^\infty \la_n^2<\infty$ and
\[
f=\sum_{n=1}^\infty \la_n e_n\hat\otimes e_n\,.
\]
Thus
\[
I_2(f)=\sum_{n=1}^\infty \la_n (\tilde e_n^2 -1)\,.
\]
Therefore
\begin{eqnarray*}
:\exp\left(\frac12 I_2(f)\right):
&=& \prod_{n=1}^\infty :e^{\frac12 \la_n (\tilde e_n^2-1)}:\\
&=& \prod_{n=1}^\infty e^{-\frac{\la_n}{2}-\frac12\log(\la_n+1)+ \frac{\la_n}{2(\la_n+1)} \tilde e_n^2}
\end{eqnarray*}

\begin{example} If we use the multiple Stratonovich integral, we have
\begin{eqnarray*}
:S_n(f_1\hatotimes f_2\hatotimes \cdots \hatotimes f_n):
&=& :\tilde f_1 \tilde f_2\cdots \tilde f_n:\\
&=& \tilde f_1 \diamond \tilde f_2\diamond \cdots \diamond\tilde f_n\\
&=& I_n(f_1\hatotimes f_2\hatotimes \cdots \hatotimes f_n)\,.
\end{eqnarray*}
\end{example}

Let $f\in H^{\hatotimes n}$ have all traces of order $k$, $k\le n/2$
and let $S_n^N(f)$ be given by \eref{e.2.sn}.   Then
\begin{eqnarray*}
:S_n^N(f):
&=&\sum_{k_1, \cdots, k_n=1}^N \langle f\,,
e_{k_1}\otimes \cdots \otimes e_{k_n}\rangle_{H^{\hatotimes n}}
\tilde
e_{k_1}\diamond \cdots \diamond \tilde e_{k_n}\\
&=&I_n \left( \sum_{k_1, \cdots, k_n=1}^N \langle f\,,
e_{k_1}\otimes \cdots \otimes e_{k_n}\rangle_{H^{\hatotimes n}}
e_{k_1}\otimes \cdots \otimes e_{k_n}\right)\\
&\rightarrow & I_n(f)\qquad (\hbox{in $L^2$})\,.
\end{eqnarray*}
Thus we see that if
existence of trace, we have if $f\in H^{\hatotimes n}$ have all traces of order $k$, $k\le n/2$,
then $S_n(f)$ exists as an element of $L^2$ and
\begin{equation}
:S_n(f):=I_n(f)\,.
\end{equation}

Combining the above with  the Hu-Meyer formula we have
\begin{proposition} If $f\in H^{\hatotimes n}$ have all traces of order $k$, $k\le n/2$,  then
\begin{eqnarray}
:I_n(f):=\sum_{k\le n/2} \frac{(-1)^k n!}{2^k k!(n-2k)!} I_{n-2k}\left({\rm Tr}^k f\right)\,.
\end{eqnarray}
\end{proposition}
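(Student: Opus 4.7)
The plan is to combine the inversion part of the Hu-Meyer formula (the second identity in \eref{e.2.15}) with the identity $:S_m(g):=I_m(g)$ that was just established, exploiting the linearity of the Wick renormalization operator $:\cdot:$.

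First I would invoke \eref{e.2.15} to rewrite
\[
I_n(f) = \sum_{k \le n/2} \frac{(-1)^k n!}{2^k k!(n-2k)!} S_{n-2k}(\mathrm{Tr}^k f),
\]
a \emph{finite} sum of multiple Stratonovich integrals of the iterated traces of $f$. Since $:\cdot:$ is defined coefficient-wise on the polynomial (equivalently, chaos) expansion of its argument, it is linear over finite sums, so applying it to both sides yields
\[
:I_n(f): = \sum_{k \le n/2} \frac{(-1)^k n!}{2^k k!(n-2k)!} :S_{n-2k}(\mathrm{Tr}^k f):.
\]
To each term on the right I would then apply $:S_m(g):=I_m(g)$ with $g = \mathrm{Tr}^k f$ and $m = n-2k$; this transforms each summand into $I_{n-2k}(\mathrm{Tr}^k f)$ and produces exactly the claim of the proposition.

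The one step demanding care is the legitimacy of the identity $:S_{n-2k}(\mathrm{Tr}^k f):=I_{n-2k}(\mathrm{Tr}^k f)$, which requires $\mathrm{Tr}^k f \in H^{\hat\otimes(n-2k)}$ to itself admit all traces of order $j$ for $j \le (n-2k)/2$. This reduces to the iterated-trace identity $\mathrm{Tr}^j(\mathrm{Tr}^k f) = \mathrm{Tr}^{k+j} f$, whose right-hand side exists by hypothesis because $k+j \le n/2$; the identity itself is a Fubini rearrangement of the doubly indexed sums over an orthonormal basis of $H$ that define the successive traces. I expect this rearrangement — not the algebra of combinatorial coefficients, which is routine — to be the main obstacle, though it is entirely standard once one unpacks the definitions.
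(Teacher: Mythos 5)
Your proposal is correct and is essentially identical to the paper's proof: the paper likewise applies the Wick renormalization $:\cdot:$ term by term to the inversion half of the Hu--Meyer formula \eref{e.2.15} and invokes $:S_{n-2k}(\tr^k f):=I_{n-2k}(\tr^k f)$ for each summand. Your extra verification that $\tr^k f$ itself admits all the requisite traces, via $\tr^j(\tr^k f)=\tr^{k+j}f$, is a point the paper passes over silently, so your write-up is if anything slightly more careful.
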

\begin{proof} From the Hu-Meyer formula \eref{e.2.15}, we have
\begin{eqnarray*}
:I_n(f):&=&\sum_{k\le n/2} \frac{(-1)^k n!}{2^k k! (n-2k)!} :S_{n-2k}(\tr^k f):\\
&=&\sum_{k\le n/2} \frac{(-1)^k n!}{2^k k! (n-2k)!} I_{n-2k}(\tr^k f) \,.
\end{eqnarray*}
\end{proof}

It is also known in \cite{yan91} that
\begin{equation}
:F G :=(:F:) \diamond (:G:)\,.
\end{equation}

In Euclidean quantum field theory, the existence of interacting
field leads to the exponential integrability problem of
$S_n(f)$ for some very special $f$ on some specific abstract
Wiener space. The Wick renormalization method reduces the
problem to exponential integrability problem of
$I_n(f)$.  For more detailed  discussion see \cite{kallianpur}.

\end{document}